\newtheorem{theorem}{Theorem}[section]
\newtheorem{proposition}{Proposition}[section]
\newtheorem{lemma}[proposition]{Lemma}
\newtheorem{remark}{Remark}[section]
\newtheorem{definition}{Definition}[section]
\newcommand{\eqdef}{\overset{\mbox{\tiny{def}}}{=}}
\newcommand{\pel}{p}
\newcommand{\PL}{V}
\newcommand{\pZ}{\pel_0}
\def\vh {\hat{\pel}}
\def\sing {(1+\hat{\pel}\cdot\omega)}
\def\ls {\lesssim}
\def\om {\omega}
\def\th {\theta}
\def\rd {\partial}
\def\Rt {\mathbb R^3}
\newcommand{\bea}{\begin{eqnarray}}
\newcommand{\eea}{\end{eqnarray}}
\def\beaa{\begin{eqnarray*}}
\def\eeaa{\end{eqnarray*}}
\title[Continuation criterion for the relativistic Vlasov-Maxwell system]{A new continuation criterion for the relativistic Vlasov-Maxwell system}
\author{Jonathan Luk}
\address{Department of Mathematics, University of Pennsylvania, Philadelphia, PA 19104}
\email{jwluk@sas.upenn.edu}
\thanks{J.L. was partially supported by the NSF Postdoctoral Fellowship DMS-1204493.}
\author{Robert Strain}
\address{Department of Mathematics, University of Pennsylvania, Philadelphia, PA 19104}
\email{strain@math.upenn.edu}
\thanks{R.M.S. was partially supported by the NSF grant DMS-1200747, and an Alfred P. Sloan Foundation Research Fellowship.}
\begin{document}

\begin{abstract}
The global existence of solutions to the relativistic Vlasov-Maxwell system given sufficiently regular finite energy initial data is a longstanding open problem.  The main result of Glassey-Strauss \cite{GS86} shows that a solution $(f, E, B)$ remains $C^1$ as long as the momentum support of $f$ remains bounded.  Alternate proofs were later given by Bouchut-Golse-Pallard \cite{BGP} and Klainerman-Staffilani \cite{KS}.  We show that only the boundedness of the momentum support of $f$ \emph{after projecting to any two dimensional plane} is needed  for $(f, E, B)$ to remain $C^1$. 
\end{abstract}

\maketitle

\section{Introduction}

We consider the initial value problem for the relativistic Vlasov-Maxwell system in three dimensions. Let the particle density $f:\mathbb R_t \times \mathbb R_x^3\times \Rt_\pel \to \mathbb R_+$ be a non-negative function of time $t\in \mathbb R$, position $x\in \mathbb R^3$ and momentum $\pel\in\mathbb R^3$ and $E,B:\mathbb R_t \times \mathbb R_x^3 \to \mathbb R^3$ be time-dependent vector fields on the position space $\mathbb R^3$.

The relativistic Vlasov-Maxwell system can be written as
\bea
& &\rd_t f+\vh\cdot\nabla_x f+ (E+\vh\times B)\cdot \nabla_\pel f = 0,\label{vlasov}\\
& &\rd_t E= \nabla_x \times B- j,\quad \rd_t B=-\nabla_x\times E,\label{maxwell}\\
& &\nabla_x\cdot E=\rho,\quad \nabla_x \cdot B=0.\label{constraints}
\eea
Here we have the charge
$$\rho(t,x) \eqdef 4\pi\int_{\Rt} f(t,x,\pel) d\pel,$$
and the current
$$j_i(t,x) \eqdef  4\pi \int_{\Rt} \vh_i f(t,x,\pel) d\pel, \quad (i=1,2,3).
$$
In these expressions we define 
\bea
\vh\eqdef \frac{\pel}{\pel_0}, \quad \pel_0\eqdef \sqrt{1+|\pel|^2}.\label{vh.def}
\eea
Notice that given initial data $f_0$, $E_0$, $B_0$ which satisfy the constraint equations \eqref{constraints}, they are propagated by the evolution equation \eqref{vlasov} and \eqref{maxwell} as long as the solution remains sufficiently regular. 

According to the relativistic Vlasov-Maxwell system \eqref{vlasov}-\eqref{constraints}, the particle density $f$ is transported along the characteristics $(X(t),\PL(t))$, which verify the following ordinary differential equations:
\bea
\frac{d}{dt}{X}(t)=\hat{\PL}(t),\quad \frac{d}{dt}\PL(t)= E(t,X(t))+\hat{\PL}(t)\times B(t,X(t)).\label{char}
\eea
These characteristics are combined with suitable initial conditions.

The global existence of solutions given sufficiently regular finite energy initial data remains an open problem. A key result of Glassey-Strauss \cite{GS86} shows that the solution remains $C^1$ as long as\footnote{The original work \cite{GS86} actually requires that \eqref{GS.condition} holds for all approximations of $f$ instead of only $f$ itself, and uses initial data with regularity $f_0\in C_c^1(\mathbb R^3_x\times \mathbb R^3_\pel)$, $E_0, B_0 \in C^2(\mathbb R^3_x)$. The assumption that \eqref{GS.condition} holds for all approximations of $f$ can be removed by a standard application of energy estimates (see, for example, \cite{LS}) but this requires slightly more regularity for the initial data (as in the statement of Theorem \ref{GS.theorem}). } the momentum support of $f$ remains bounded:

\begin{theorem}[Glassey-Strauss \cite{GS86}]\label{GS.theorem}
Consider initial data $(f_0(x,\pel),E_0(x),B_0(x))$ which satisfies the constraints \eqref{constraints} such that $f_0\in H^5(\mathbb R^3_x\times \mathbb R^3_\pel)$ with compact support in $(x, \pel )$, $E_0, B_0 \in H^5(\mathbb R^3_x)$ and such that the initial particle density is non-negative, i.e., $f_0\geq 0$. 
Let $(f,E,B)$ be the unique classical solution to \eqref{vlasov}-\eqref{constraints} in $[0,T)$. Assume that there exists a bounded continuous function $\kappa(t):[0,T)\to\mathbb R_+$ such that 
\begin{equation}\label{GS.condition}
f(t,x,\pel)=0\quad\mbox{for }|\pel|\geq \kappa(t), \quad \forall x \in \mathbb{R}^3.
\end{equation}
Then, there exists $\epsilon>0$ such that the solution extends uniquely in $C^1$ beyond $T$ to an interval $[0,T+\epsilon]$.
\end{theorem}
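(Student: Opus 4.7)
The plan is to bootstrap uniform $C^1$ bounds on $(f,E,B)$ over $[0,T)$; combined with the standard local well-posedness theory, this extends the solution beyond $T$. Because $f$ is constant along the characteristics \eqref{char}, $\|f(t)\|_\infty = \|f_0\|_\infty$, and the hypothesis \eqref{GS.condition} then gives $\|\rho(t)\|_\infty + \|j(t)\|_\infty \ls \kappa(T)^3 \|f_0\|_\infty$ uniformly.

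Next I would express $E, B$ via a Kirchhoff-type representation. Differentiating \eqref{maxwell} yields $\Box E = -\nabla_x \rho - \rd_t j$ and $\Box B = \nabla_x \times j$, and the Kirchhoff formula on the backward light cone from $(t,x)$, with $r = t-s$, $y = x - r\om$, produces, after substituting the kinetic definitions of $\rho,j$ and an integration by parts in $\pel$ to remove derivatives on $f$, a representation of the schematic form
\beaa
E(t,x) = E^{\mathrm{data}}(t,x) + \int_0^t\!\!\int_{|\om|=1}\!\!\int_{\Rt} \left[\frac{a(\om,\vh)}{r^2\,\sing^{2}} + \frac{b(\om,\vh)}{r\,\sing^{2}}\right] f(s,y,\pel)\,d\pel\,dS(\om)\,r^2\,ds,
\eeaa
with analogous formula for $B$. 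The tensors $a,b$ are smooth and bounded; the factor $\sing^{-2}$ is integrable on the sphere for each fixed $\pel$; and compactness of the $\pel$-support (precisely \eqref{GS.condition}) prevents any blow-up from the region where $1+\vh\cdot\om$ is small. This yields $\|E\|_\infty + \|B\|_\infty \ls 1$ on $[0,T)$.

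The delicate step is controlling $\nabla_{t,x} E$ and $\nabla_{t,x} B$. A direct differentiation of the representation puts a full derivative onto $f$, which cannot be bounded a priori. The Glassey--Strauss device is to split the spacetime gradient $\nabla_{t,x}$ acting on $f(s,y,\pel)$ into a ``good'' transport operator $T = \rd_s + \vh\cdot\nabla_y$ tangent to the Vlasov characteristics plus operators $S$ tangential to the light cone in $\om$ together with $\nabla_\pel$-directions. By \eqref{vlasov}, $Tf = -(E+\vh\times B)\cdot\nabla_\pel f$, trading the dangerous $y$-derivative for $\nabla_\pel f$ multiplied by bounded fields, and the $S$-derivatives can be integrated by parts on the sphere and in $\pel$ onto the kernel. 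The careful accounting must verify that after these manipulations the angular singularity remains $\sing^{-2}$ and not $\sing^{-3}$ or worse, so that compactness of the $\pel$-support again ensures integrability.

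Once the decomposition is carried out, one arrives at an inequality of Gronwall type $\|\nabla E\|_\infty + \|\nabla B\|_\infty \ls 1 + \int_0^t (\|\nabla E\|_\infty + \|\nabla B\|_\infty)\,ds$, giving uniform $C^1$ bounds on the fields; linearizing \eqref{char} then yields $C^1$ bounds on $f$, and iteration on higher Sobolev norms via energy estimates (cf.\ \cite{LS}) closes the required regularity to apply local existence at a time near $T$. The main obstacle is precisely the integration-by-parts bookkeeping in the previous paragraph: ensuring that after every derivative is shifted off $f$, no term produces an angular singularity worse than $\sing^{-2}$, and that the boundary contributions from integrating by parts in $\pel$ or on the sphere either vanish (by the compact $\pel$-support of $f$) or are of controlled lower order.
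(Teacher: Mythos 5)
This theorem is not proved in the paper; it is quoted from Glassey--Strauss \cite{GS86}, with the footnote explaining that the stated $H^5$ hypotheses (rather than the original $C^1_c$/$C^2$ ones) are handled by a standard energy-estimate argument as in \cite{LS}. Your sketch is a faithful reconstruction of the Glassey--Strauss strategy: Kirchhoff representation for $\Box E$, $\Box B$, the splitting of $\nabla_{t,x}$ into the transport operator $T=\rd_s+\vh\cdot\nabla_y$ plus cone-tangential operators, use of the Vlasov equation to trade $Tf$ for $\nabla_\pel f$, and integration by parts in $\pel$ and on the sphere, with the bounded momentum support taming the $\sing^{-2}$ singularity via $\sing^{-1}\ls \pZ^2\ls \kappa^2$.

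Two points in your sketch are imprecise enough to matter. First, your schematic representation of $E$ puts only $f$ times a bounded kernel in the $1/r$ term, but after the integration by parts in $\pel$ this term necessarily carries a factor of the Lorentz force $(E+\vh\times B)$ — compare \eqref{ES.id} and \eqref{BS.id} in the paper. Consequently the zeroth-order bound $\|E\|_\infty+\|B\|_\infty\ls 1$ is not a direct consequence of the representation; it requires a Gronwall argument in which the field appears on both sides (this closes easily once the kernels are bounded, but as written your step is circular). Second, your closing inequality for the gradients is not self-contained in $\nabla E,\nabla B$: the derivative representation produces $\nabla_{x,\pel}f$ inside the cone integrals, so one obtains a coupled system for $\|\nabla E\|_\infty+\|\nabla B\|_\infty$ and $\|\nabla_{x,\pel}f\|_\infty$, and the singular kernel forces a logarithmic loss, so the correct closure is a log-Gronwall rather than the linear Gronwall you wrote. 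Neither issue is fatal — both are resolved in \cite{GS86} — but they are exactly the places where the "bookkeeping" you defer is load-bearing.
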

\begin{remark}
We note that as a consequence of the assumptions of the above theorem, the initial energy is bounded
\begin{equation}\label{f.energy}
\frac{1}{2}\int_{\mathbb R^3} (|E_0|^2+|B_0|^2) dx+4\pi\int_{\mathbb R^3} \int_{\mathbb R^3} \pel_0 f d\pel dx<\infty,
\end{equation}
the initial particle density satisfies
\begin{equation}\label{ini.bd}
||f_0||_{L_{x,\pel}^\infty}< \infty,
\end{equation}
and the momentum support is initially bounded as
\begin{equation}\label{ini.bd.mom}
\sup\{|\pel|\mbox{: there exists }x\in\mathbb R^3\mbox{ such that }f_0(x,\pel)\neq 0\} <\infty.
\end{equation}
We record the above bounds for the initial data explicitly as they will be useful for the argument of our main theorem below.
\end{remark}

In this paper, we extend the Glassey-Strauss criterion to a new continuation criterion requiring only the boundedness of the momentum support of $f$ \emph{after projecting to any two dimensional plane}. This is in contrast to the Glassey-Strauss criterion \cite{GS86} which requires the boundedness of the full momentum support.

\begin{theorem}\label{main.theorem.0}
Consider initial data $(f_0(x,\pel),E_0(x),B_0(x))$ which satisfies the constraints \eqref{constraints} such that $f_0\in H^5(\mathbb R^3_x\times \mathbb R^3_\pel)$ with compact support in $(x, \pel )$, $E_0, B_0 \in H^5(\mathbb R^3_x)$ and such that the initial particle density is non-negative, i.e., $f_0\geq 0$. 

Let $(f,E,B)$ be the unique classical solution to \eqref{vlasov}-\eqref{constraints} in $[0,T)$. Assume that there exists a plane $Q$ with $0\in Q\subset \mathbb R^3$ and a bounded continuous function $\kappa(t):[0,T) \to\mathbb R_+$ such that 
$$f(t,x,\pel)=0\quad\mbox{for }|\mathbb P_Q \pel|\geq \kappa(t), \quad \forall x \in \mathbb{R}^3,$$
where $\mathbb P_Q$ denotes the orthogonal projection to the 2-plane $Q$.
Then, there exists $\epsilon>0$ such that the solution extends uniquely in $C^1$ beyond $T$ to an interval $[0,T+\epsilon]$.
\end{theorem}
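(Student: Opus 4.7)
The plan is to reduce the theorem to Theorem \ref{GS.theorem}. After a rotation we may assume $Q = \{\pel_3=0\}$, so the hypothesis reads $|\pel_1|,|\pel_2|\leq \kappa(t)$ on the support of $f$, and it suffices to show that $|\pel_3|$ stays bounded along every characteristic on $[0,T)$. From \eqref{char},
\[ \frac{d\pel_3}{dt} = E_3 + \vh_1 B_2 - \vh_2 B_1, \]
so any finite bound on $\|E\|_{L^\infty_{t,x}([0,T)\times\Rt)} + \|B\|_{L^\infty_{t,x}([0,T)\times\Rt)}$, together with the compact initial momentum support \eqref{ini.bd.mom}, gives by direct integration a uniform bound on $|\pel_3|$ on $[0,T)$; at that point the full momentum support is bounded and Theorem \ref{GS.theorem} extends the solution beyond $T$.

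The core task is therefore to control $(E,B)$ in $L^\infty_{t,x}$. For this I would use the Glassey-Strauss representation, which writes $(E,B)(t,x)$ as the free evolution of the initial data plus integrals against $f$ over backward light cones parametrized by $(s,\om,\pel)\in[0,t]\times S^2\times \Rt$ with $\om=(y-x)/|y-x|$, after first removing the $\nabla_\pel f$ terms by integration by parts using \eqref{vlasov}. The resulting kernels carry factors of $\sing^{-k}$ with $k\in\{1,2\}$, which are harmless in the Glassey-Strauss setting where $|\vh|$ is bounded strictly below $1$, but here can become as small as $\pel_0^{-2}$ when $\om\approx \mp e_3$ and $|\pel_3|$ is large.

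The main new estimate is an angular decomposition of the $\om$-integral adapted to $Q$: fix $\delta>0$ small and split $S^2$ into a transversal region $\{|\om_3|\leq 1-\delta\}$ and two polar caps $\{|\om_3|>1-\delta\}$. Using $|\vh_1|^2+|\vh_2|^2 \leq \kappa(t)^2/\pel_0^2$, on the transversal region one has
\[ 1 + \vh\cdot\om \geq 1 - |\vh_3||\om_3| - \frac{\kappa(t)}{\pel_0} \geq \frac{\delta}{2} \]
once $\pel_0$ is large compared to $\kappa(t)/\delta$ (the case of bounded $\pel_0$ falls under Glassey-Strauss directly), which regularizes the kernel as in \cite{GS86,KS}. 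On each polar cap, parametrizing $\om$ by its transverse coordinates and completing the square gives the schematic form
\[ 1 + \vh\cdot\om \sim \frac{1}{\pel_0^2} + \Bigl(\om_1 + \frac{\pel_1}{\pel_0}\Bigr)^2 + \Bigl(\om_2 + \frac{\pel_2}{\pel_0}\Bigr)^2, \]
so the worst polar-cap integral $\int\sing^{-1}\,d\om_1\,d\om_2$ is only logarithmic in $\pel_0$. The projection hypothesis further restricts the $(\pel_1,\pel_2)$-integration to a compact slab, leaving a one-dimensional $\pel_3$-moment of $f$ to estimate by interpolating $\|f\|_{L^\infty_{x,\pel}}$ against local versions of $\int \pel_0 f\,d\pel$.

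The main technical obstacle is that, beyond the conserved $\|f\|_{L^\infty_{x,\pel}}$ inherited from \eqref{ini.bd} and the $L^1_x$ energy bound \eqref{f.energy}, no other globally available information on $f$ is immediately at hand, and neither bound directly furnishes pointwise-in-$x$ control of the moments that enter the angular decomposition. I expect to close this by a bootstrap argument: posit an a priori time-dependent bound on a suitable $\pel_3$-moment of $f$ in $L^\infty_x$; use the angular decomposition to convert it into an $L^\infty_{t,x}$ bound on $(E,B)$ that depends only slowly (logarithmically) on the moment; feed this into \eqref{char} to bound $|\pel_3|$ along trajectories; and finally use the Vlasov equation \eqref{vlasov} together with the projection hypothesis to recover and improve the moment bound, closing the bootstrap on $[0,T)$.
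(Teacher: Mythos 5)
Your reduction to Theorem \ref{GS.theorem} and your geometric observation are both on target: the paper likewise bounds the momentum support via the Glassey--Strauss representation, and your polar-cap identity $\sing\sim \pZ^{-2}+|\om+\vh|^2$ together with the restriction of $(\pel_1,\pel_2)$ to a compact slab is essentially the content of the paper's main particle-density estimate (its Propositions \ref{trivial.sing.est}--\ref{main.est}, organized there through the angles $\angle(\frac{\pel}{|\pel|},\pm\om)$ and $\angle(\frac{\pel}{|\pel|},\pm e_3)$). The genuine gap is in your treatment of the field-dependent part of the representation. After integrating by parts in $\pel$ to remove $\nabla_\pel f$, the resulting terms $E_S,B_S$ carry the factor $(E+\vh\times B)$ in the integrand over the backward cone, so the representation is an integral inequality for the fields, not an explicit formula in $f$. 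Your bootstrap (moment of $f$ $\Rightarrow$ $L^\infty_{t,x}$ bound on $(E,B)$ $\Rightarrow$ bound on $\pel_3$ $\Rightarrow$ moment of $f$) does not close at the middle step: pulling out $\|E\|_{L^\infty}+\|B\|_{L^\infty}=:M$ and using your (correct) logarithmic angular estimate gives at best $M(t)\ls 1+\log^2 P(t)\int_0^t M(s)\,ds$, hence $M\ls e^{Ct\log^2 P}$ and $P\ls e^{Ct\log^2 P}$, an inequality satisfied by all large $P$ and therefore vacuous. Even a single power of $\log P$ in the Gronwall coefficient only closes for short time. This loss is intrinsic to any argument that estimates the full field amplitude pointwise against the $\sing^{-1}$ kernel.

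The paper avoids this with two structural inputs absent from your proposal. First, a null decomposition of the Lorentz force (Propositions \ref{S.prop.2}--\ref{S.prop.3}) shows that the most singular kernel, $\sing^{-1}$, multiplies only the components $K_g=(E\cdot\om, B\cdot\om, E-\om\times B, B+\om\times E)$, which are controlled in $L^2$ of the backward cone by the energy flux identity (Proposition \ref{cons.law.2}); Cauchy--Schwarz on the cone then reduces matters to an $L^2_\sigma$ bound on $\int f/(\pZ\sing)\,d\pel$, which your angular decomposition supplies, with no bootstrap in the field amplitude at all. Second, the leftover $\sing^{-1/2}$ kernel multiplies $|B|$, which is \emph{not} flux-controlled; this term is bounded only after integration along a characteristic, via Pallard's lemma (Proposition \ref{P.lemma}) and the conservation of $\|B\|_{L^2(\Rt)}$, at the price of a factor $P(s)$ from $\|\int f\,d\pel\|_{L^\infty}$. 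The resulting inequality $P(t)\ls 1+\log P(t)\int_0^t g(s)P(s)\,ds$ is linear in $P$ up to a logarithm and closes by a double-exponential continuity argument. Note also that the paper never establishes an $L^\infty_{t,x}$ bound on $(E,B)$ before bounding $P$; requiring one, as you do, makes the task strictly harder. To repair your argument you would need to incorporate the flux conservation law, the null structure of the singular terms, and a characteristic-averaged estimate in place of the pointwise field bound.
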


\begin{remark}
The methods in the paper extend in a straight-forward way to an analogous continuation criterion for multi-species relativistic Vlasov-Maxwell system, as well as the case where the equation is coupled with a \emph{given} external electromagnetic force. We omit the details here.
\end{remark}

\begin{remark}
Since the relativistic Vlasov-Maxwell system is invariant under rotation, given any 2-plane $Q$, it can be rotated to coincide with the $\pel_1-\pel_2$ plane. Thus, without loss of generality, we can assume that the plane is given in the $(\pel_1,\pel_2,\pel_3)$ coordinate system by $\{\pel_3=0\}$. We will henceforth make this assumption.
\end{remark}

In fact, we prove a more general and quantitative theorem, which implies Theorem \ref{main.theorem.0}. More precisely:
\begin{theorem}\label{main.theorem}
Consider initial data $(f_0(x,\pel),E_0(x),B_0(x))$ which satisfies the constraints \eqref{constraints} such that $f_0\in H^5(\mathbb R^3_x\times \mathbb R^3_\pel)$ with compact support in $(x, \pel )$, $E_0, B_0 \in H^5(\mathbb R^3_x)$ and such that the initial particle density is non-negative, i.e., $f_0\geq 0$.

Let $(f,E,B)$ be the unique solution to \eqref{vlasov}-\eqref{constraints} in $[0,T)$. Assume that for every $t\in [0,T)$, there exists a measurable positive function $\kappa=\kappa(t,\gamma)>1$ such that
\bea
\sup\left\{\sqrt{\pel_1^2+\pel_2^2} : \frac{\pel_2}{\pel_1}=\tan\gamma\mbox{ and } f(t,x,\pel)\neq 0\mbox{ for some }x\in\Rt\right\} <\kappa,\label{main.assumption.1}
\eea
and for 
\bea
A(t) \eqdef \|\kappa(t,\cdot)\|_{L^4_\gamma},
\label{main.assumption.2}
\eea
we have
\bea
\int_0^T \left(A(t)^2+\left(\int_0^t A(s)^8 ds\right)^{\frac 12}\right)  dt<+\infty.\label{main.assumption.3}
\eea
Then, there exists $\epsilon>0$ such that the solution extends uniquely in $C^1$ beyond $T$ to an interval $[0,T+\epsilon]$.
\end{theorem}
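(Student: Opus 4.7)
The plan is to verify the hypothesis of Theorem \ref{GS.theorem} by showing that the full momentum support of $f$ remains bounded on $[0,T)$. The assumption (\ref{main.assumption.1}) already controls the in-plane components $(\pel_1,\pel_2)$, so only $\pel_3$ can escape. Along a characteristic $(X(t),\PL(t))$ the momentum ODE (\ref{char}) gives
$$\frac{d}{dt}\PL_3(t)=E_3(t,X(t))+\hat{\PL}_1(t)B_2(t,X(t))-\hat{\PL}_2(t)B_1(t,X(t)),$$
so the task reduces to obtaining pointwise-in-$x$ control of the scalar combination $G\eqdef E_3+\hat{\pel}_1 B_2-\hat{\pel}_2 B_1$, evaluated at the characteristic direction, and integrating in $t$.

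\textbf{Glassey-Strauss splitting.} Next I would invoke the Glassey-Strauss representation, writing each of $E$ and $B$ as an integral over past light cones split into (i) a data piece, handled by the hypotheses on $(f_0,E_0,B_0)$; (ii) an ``$S$-term'' integrating $f$ against a kernel carrying the dangerous weight $\sing^{-2}|y-x|^{-2}$; and (iii) a ``$T$-term'' involving $\nabla_\pel f$ with only one power of $\sing^{-1}$. The $T$-term is handled by integration by parts in $\pel$, turning the derivative onto the (smoother) kernel, after which one estimates using $\|f\|_{L^\infty_{x,\pel}}$ together with $L^p$ bounds on $\rho,j$ that follow from conservation of mass and the finite energy (\ref{f.energy}). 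The crux lies in the $S$-term: the combination $G$ is chosen precisely so that the numerator of the singular kernel, which has a vector structure in $\hat{\pel}-\om$, produces an algebraic cancellation in the $\pel_3$-direction that softens the $\sing^{-2}$ singularity to something manageable.

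\textbf{Exploiting the 2D bound and closing.} With the improved $S$-kernel in hand, the strategy is to change variables in $\pel$ to cylindrical coordinates $(\pel_1,\pel_2,\pel_3)=(r\cos\gamma,r\sin\gamma,\pel_3)$ adapted to the distinguished plane. The $r$-integration is then restricted to $\{r<\kappa(t,\gamma)\}$ by (\ref{main.assumption.1}); the $\pel_3$-integration is tamed by $\|f\|_{L^\infty_{x,\pel}}$ and the $1/\pel_0$ weight carried by $\hat{\pel}$; and the joint $\gamma$-$\om$ integration is estimated by a Hölder inequality producing exactly $\|\kappa(t,\cdot)\|_{L^4_\gamma}=A(t)$. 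Combining these ingredients I expect a bound of the form
$$\|G(t,\cdot)\|_{L^\infty_x}\ls 1+A(t)^2+\Big(\int_0^t A(s)^8\,ds\Big)^{1/2},$$
where the $L^8_t$-expression inside the square root arises from a Cauchy--Schwarz in time applied to the $T$-contribution (the derivative piece naturally costs an $L^2$ argument). Integrating along the characteristic and invoking (\ref{main.assumption.3}) then yields a uniform bound on $|\PL_3|$ over $[0,T)$, whence Theorem \ref{GS.theorem} extends the solution. The main obstacle is Step~3: locating the precise algebraic cancellation for the combination $G$ inside the $S$-kernel, and then tuning the $\om$-$\gamma$ Hölder exponents sharply enough that only $\|\kappa\|_{L^4_\gamma}$ (and not a larger $L^q_\gamma$ norm) appears, with exactly the time-integrability prescribed by (\ref{main.assumption.3}); any loss of power at this juncture would fail to close the estimate.
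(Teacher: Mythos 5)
Your overall reduction (bound the momentum support and invoke Theorem \ref{GS.theorem}) matches the paper, but the core of your argument rests on two claims that do not hold as stated. First, the claimed algebraic cancellation in the combination $G=E_3+\hat{\pel}_1B_2-\hat{\pel}_2B_1$ is not the mechanism that tames the singular kernel. The useful structure in the Glassey--Strauss representation is a null structure relative to the cone direction $\om$, not relative to $e_3$: the paper shows (Propositions \ref{S.prop.2} and \ref{S.prop.3}) that the full Lorentz force satisfies $|E+\vh\times B|\ls |E\cdot\om|+|B\cdot\om|+|B+\om\times E|+\sing^{\frac12}|B|$, so the most singular kernel $\sing^{-1}$ is always paired with the ``good'' components $K_g$ whose flux through the backward light cone is controlled by energy conservation (Proposition \ref{cons.law.2}), while the remaining term pairs a softened kernel $\sing^{-\frac12}$ with $|B|$, which is only controlled in $L^2(\Rt)$ and must be handled by integrating along a characteristic via Pallard's lemma (Proposition \ref{P.lemma}). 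None of these three ingredients --- the cone flux conservation law, the Cauchy--Schwarz in $L^2(C_{t,x})$ against $K_g$, and Pallard's estimate --- appears in your outline, and without them the $S$-type terms cannot be closed.

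Second, your claimed bound $\|G(t,\cdot)\|_{L^\infty_x}\ls 1+A(t)^2+(\int_0^t A(s)^8\,ds)^{\frac12}$, uniform in the momentum support, is far too strong and is not substantiated; if the fields were bounded independently of $P$ the theorem would follow by direct integration. The assumptions \eqref{main.assumption.1}--\eqref{main.assumption.3} say nothing about $\pel_3$, so every field estimate necessarily carries powers of $P(t)$: for instance $\|\int f\,d\pel\|_{L^\infty_x}\ls A(t)^2P(t)$ (the $\pel_3$-extent of the support costs a factor of $P$), and the singular density integral obeys only $\int f/(\pZ\sing)\,d\pel\ls\min\{P^2\log P,\ A^4\log P\,(\angle(e_3,\pm\om))^{-2}\}$, which is the paper's main estimate and requires an angular decomposition (according to whether $\pel/|\pel|$ is close to $\pm e_3$) that you do not carry out. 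The resulting inequality has the form $P(t)\ls 1+\log P(t)\int_0^t g(s)P(s)\,ds$ and must be closed by a logarithmic Gronwall bootstrap, a step absent from your proposal. Finally, your description of the splitting inverts the roles of the $S$ and $T$ terms: the integration by parts in $\pel$ is already built into the Glassey--Strauss representation, the $T$-terms contain $f$ (not $\nabla_\pel f$) against a $(t-s)^{-2}\sing^{-2}$ kernel with no field factor, and the $S$-terms carry the Lorentz force; this is a lesser issue than the two gaps above but indicates the decomposition you would be working with is not the one you describe.
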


\begin{remark}
Notice that $\kappa(t)$ from Theorem \ref{main.theorem.0} is bounded and thus satisfies the assumptions \eqref{main.assumption.1} - \eqref{main.assumption.3}.
\end{remark}

Our result is motivated in part by the fact that global regularity of the relativistic Vlasov-Maxwell system is known in lower dimensions and also under symmetry assumptions (see the discussions in Section \ref{sec.previous.results} later on). Here, we show that instead of assuming that the initial data are of lower dimensions in the spatial variables, we can also obtain an improved continuation criterion which makes an \emph{a priori} assumption that is lower dimensional in the momentum variables compared to the known results.

While in general no bounds of the quantity $\|\kappa\|_{L^p_t L^q_{\gamma}}$ are to our knowledge currently available a priori, we note that in the two-and-one-half dimensional case \cite{GS2.5D} Glassey-Schaeffer established that the orthogonal projection of the momentum support to a line is bounded, which implies\footnote{To see \eqref{kappa.ex}, notice that Glassey-Schaeffer showed that in the two-and-one-half dimensional case where $x_1$ is the direction of symmetry, 
$$P_1(T)=\sup\{|\pel_1|:f(t,x,\pel_1,\pel_2,\pel_3)\neq 0 \mbox{ for some }t\leq T, x\in\mathbb R^3,\pel_2,\pel_3\in\mathbb R\}\leq C_T$$
for some $C_T\geq 0$. Then, according to \eqref{main.assumption.1}, we can choose $\kappa(\gamma)=\frac{C_T}{\cos\gamma}+2$, which is in $L^{1,\infty}_{\gamma}$.
} in particular that
\begin{equation}\label{kappa.ex}
\|\kappa\|_{L^\infty_t([0,T];L^{1,\infty}_\gamma)}\leq C_T,
\end{equation}
where $L^{1,\infty}_\gamma$ denotes the usual weak-$L^1$ space in the variable $\gamma$. On the other hand, in order to obtain global regularity in the two-and-one-half dimensional case, the full strength of the assumption on two-and-one-half dimensionality, in addition to \eqref{kappa.ex}, was exploited in \cite{GS2.5D}. Without assuming any symmetries, Theorem \ref{main.theorem} shows that it suffices to have $\kappa$ in $L^4_{\gamma}$ in order to continue the solution.

\subsection{Some previous results on the relativistic Vlasov-Maxwell system}\label{sec.previous.results}

The existence of global in time solutions to the relativistic Vlasov-Maxwell system is only known in the perturbation regime or under symmetry assumptions. Small data global existence was proved by Glassey-Strauss in \cite{GSdilute}. This result was later generalized to the case where the initial data were only assumed to be nearly neutral \cite{GSnearneutral}. The small data case was also generalized in \cite{S} to include small initial data without assuming initial compact momentum support.

Global existence of classical solutions was established by Glassey-Schaeffer \cite{GS2.5D} for initial data with translational symmetry in one of the position variables, i.e., the so-called two-and-one-half dimensional case (see also the related global existence result in two dimensions in \cite{GS2D1}, \cite{GS2D2}). Global existence is also known for the class of spherically symmetric solutions, for which the relativistic Vlasov-Maxwell system reduces to the relativistic Vlasov-Poisson system \cite{GSsymmetric}. This class of solution is moreover shown to be globally stable \cite{Rein}.

While a large data well-posedness theory without symmetry assumptions is not available, various continuation criteria are known. The outstanding result is the aforementioned Glassey-Strauss criterion \cite{GS86}, which requires the momentum support to be bounded. Alternative proofs of the Glassey-Strauss theorem were subsequently given by Bouchut-Golse-Pallard \cite{BGP} and Klainerman-Staffilani \cite{KS}. In particular, in \cite{KS}, a Fourier-analytic approach was introduced to obtain the continuation criterion. 

Another type of continuation criteria assumes the boundedness of
$$M_{\th,q} \eqdef  ||\pZ^{\theta} f||_{L^q_x L^1_\pel},$$
It is known that for certain ranges of $\th$ and $q$, the boundedness of $M_{\th,q}$ implies the boundedness of the momentum support of $f$, and hence would be sufficient to conclude regularity by the Glassey-Strauss theorem. Results in this direction were first obtained by Glassey-Strauss in \cite{GS87}, \cite{GS87.2} for the $\th=1$, $q=+\infty$ case. The range $\th >\frac 4q$ for $6\leq q\leq +\infty$ was later achieved by Pallard \cite{Pallard}. An end-point case $\th=0$, $q=+\infty$ was subsequently attained by Sospedra--Alfonso-Illner \cite{AI}.

We also note that in the non-relativistic limit, the relativistic Vlasov-Maxwell system reduces to the non-relativistic Vlasov-Poisson system \cite{Schaeffer.limit}. In contrast to the relativistic Vlasov-Maxwell system, global regularity for the non-relativistic Vlasov-Poisson system in three dimensions has been established by Pfaffelmoser \cite{Pfaffelmoser}, Lions-Perthame \cite{LP}, and Schaeffer \cite{Schaeffer}.

\subsection{Strategy of proof}

To prove Theorem \ref{main.theorem}, we will reduce it to Theorem \ref{GS.theorem} by showing that the momentum support of $f$ is bounded. To this end, we make the following definition.

\begin{definition}
Let $P(t)$ be defined by
$$P(t) \eqdef 2+\sup\{|\pel|: f(s,x,\pel)\neq 0 \mbox{ for some } 0\leq s\leq t \mbox{ and }x\in \Rt\}.$$
\end{definition}

\begin{remark}
Notice that $P(t)$ is an upper bound of the momentum support in the past of time $t$. Moreover, $P(t)$ is by definition a non-decreasing function and $P(t)\geq 2$. 
\end{remark}

Our goal will thus be to provide a bound for $P(t)$. Notice that it is sufficient to establish an integral inequality of the form
\bea\label{goal.0}
P(t)\ls 1+\log P(t) \int_0^t g(s)P(s) ds,
\eea
where we use the notation that 
\bea 
x \ls y\mbox{  if there exists a bounded function } C(t)\mbox{ such that } x\leq C(t)y
\notag
\eea
and $g(s)$ is an integrable function.

In view of the ordinary differential equation \eqref{char} for the characteristics $\PL$, we in turn need to bound the integral of the electromagnetic fields $E$ and $B$ along characteristics of the Vlasov equation.

\subsubsection{Estimates for the singularity}

In \cite{GS86}, Glassey-Strauss found a representation of $E$ and $B$ as integrals on a backward light cone whose integrand depends only on $E$, $B$ and $f$, but not their derivatives. This in particular allowed them to prove Theorem \ref{GS.theorem}. However, this representation contains singular terms of the type
\begin{equation}\label{error.terms}
\int_{C_{t,x}} \int_{\Rt} \frac{(|E|+|B|)f}{(t-s)\pZ\sing} d\pel d\sigma,\quad \int_{C_{t,x}} \int_{\Rt} \frac{f}{(t-s)^2\pZ^2\sing^{\frac 32}} d\pel d\sigma.
\end{equation}
The above integral in position space is over a backward light cone $C_{t,x}$ emanating from $(t,x)$, i.e.,
\bea
C_{t,x} \eqdef \left\{(s,y)\in\mathbb R\times\Rt | ~0\le s\leq t, ~t-s=|y-x|\right\}.
\notag
\eea
We refer the readers to \eqref{normal.def} - \eqref{normal.coord} for the precise definition of this integral. 
Here, it suffices to note that $\om$ is a unit vector and the singular terms become large only if $\frac{\pel}{|\pel|}$ and  $\om$ are almost anti-parallel and $\pel$ is large.

Now the assumptions \eqref{main.assumption.1} - \eqref{main.assumption.3} in the main theorem are useful in two ways. On one hand, the assumptions \eqref{main.assumption.1} - \eqref{main.assumption.3} reduce the dimensionality of the integral since the range of integration in two of the three dimensions are a priori bounded\footnote{in an $L^4$-averaged sense along the angular directions $\gamma$.} by these assumptions.

On the other hand, while the pointwise behavior of the singularity is no better than
$$\frac{1}{\sing}\ls P^2,$$
our assumptions imply that this upper bound can only be achieved in a relatively small set in physical space. In particular, the set of $\om$ such that this bound is achieved has area $O(P^{-2})$. More precisely, the assumptions \eqref{main.assumption.1} - \eqref{main.assumption.3} also allow us to control the strength of the singularity when $-\om$ is not parallel to $e_3$ (where $e_3=(0,0,1)$ is the standard basis vector). This is because the singular term is large if $\frac{\pel}{|\pel|}$ and $\om$ are anti-parallel and $\pel$ is large. In the case where $-\om$ is not parallel to $e_3$, $\frac{\pel}{|\pel|}$ and $-\om$ can only be parallel if $\frac{\pel}{|\pel|}$ is not parallel to $e_3$. However, in this case the assumptions \eqref{main.assumption.1} - \eqref{main.assumption.3} provide a bound for $\pZ$.  This can be appropriately quantified and it provides an estimate for the singular terms except when $-\om$ is almost parallel to $e_3$. 

\subsubsection{Estimates for the nonlinear terms}

Now, returning to the estimate for $E$ and $B$, we control the contribution from the term
$$\int_{C_{t,x}} \int_{\Rt} \frac{(|E|+|B|)f}{(t-s)\pZ\sing} d\pel d\sigma.$$

One of the important observations in our work, which is highly reminiscent of that of Glassey-Schaeffer \cite{GS2.5D}, \cite{GS2D1}, \cite{GS2D2} in the two dimensional case, is that for this term, the most singular contributions are always coupled with good components of the electromagnetic field, i.e., those components that can be controlled after integrating along the null cone by the conserved energy of the relativistic Vlasov-Maxwell system\footnote{Note that this conservation law misses two of the six independent components of the electromagnetic field.}. More precisely, the most singular terms are of the form 
$$\int_{C_{t,x}} \int_{\Rt} \frac{|K_g|f}{(t-s)\pZ\sing} d\pel d\sigma,$$
where by energy conservation, $K_g$ (which is defined in \eqref{good.K} later on) satisfies the a priori assumption (as in Proposition \ref{cons.law.2}) that
$$\int_{C_{t,x}} |K_g|^2 d\sigma \lesssim 1.$$
Hence, we only need to control 
$$\int_{\Rt} \frac{f}{(t-s)\pZ\sing} d\pel$$
in $L^2$ on the cone $C_{t,x}$. Precisely because we are only required to obtain an $L^2$ bound (as opposed to an $L^\infty$ bound), we can take advantage of the fact that the singularity is large only on a small set of angular directions as mentioned above. This is sufficient to show that $L^2$ norm of this integral is bounded by $P\log P.$

However, unlike in the two-dimensional case, there are singular terms that are coupled with a ``bad" component of the electromagnetic field. Nevertheless, these terms are less singular. More precisely, they take the form
$$\int_{C_{t,x}} \int_{\Rt} \frac{(|E|+|B|)f}{(t-s)\pZ\sing^{\frac 12}} d\pel d\sigma.$$
Unlike for the previous terms, $E$ and $B$ cannot be controlled by the conservation law after integrating in $L^2$ along the cone $C_{t,x}$.

Nevertheless, in order to control the growth of the velocity support, it is sufficient to bound $E$ and $B$ after integration along a characteristic of the Vlasov equation. We apply an estimate of Pallard in \cite{Pallard}, which shows that after this integration, and up to an logarithmic loss in $P$, it suffices to control $\int_{\Rt}\frac{(|E|+|B|)f}{\pZ{\sing}^{\frac 12}} d\pel$ in $L^2$ of $\Rt$. Thus, we can use the conservation law for $E$ and $B$ in $L^2(\Rt)$ and it remains to bound $\int_{\Rt}\frac{f}{\pZ{\sing}^{\frac 12}} d\pel$ in $L^\infty$. This final goal can then be achieved by noticing that assumptions \eqref{main.assumption.1} - \eqref{main.assumption.3} give an a priori control over the size of the domain of integration in the $\pel_1-\pel_2$ plane.

\subsubsection{Estimates for the linear terms}

As pointed out above, there is another contribution to $E$ and $B$ from the term
$$\int_{C_{t,x}} \int_{\Rt} \frac{f}{(t-s)^2\pZ^2\sing^{\frac 32}} d\pel d\sigma.$$
While the function $\frac{1}{\pZ^2\sing^{\frac 32}}$ can be as large as $P$ on the support of $f$, it is much smaller ($\sim \frac{1}{\pZ^2}$) in the complement of a small set. Therefore, this term in fact behaves much more favorably and only grows logarithmically in terms of $P$. Combining all the above estimates, we therefore achieve \eqref{goal.0}.

\subsection{Outline of the paper}

We outline the remainder of the paper. In Section \ref{sec.cons.law}, we state the conservation laws that we will apply in the proof of the main theorem. In Section 3, we state the Glassey-Strauss representation of the electromagnetic field and observe the structures of the singular terms as indicated above. In Section 4, we apply the assumptions \eqref{main.assumption.1} - \eqref{main.assumption.3} to obtain the main estimates for the particle density. This will then be applied in Section 5 to derive the necessary bounds for the electromagnetic fields $E$ and $B$. Finally, in Section 6, we gather all the estimates to conclude the proof of Theorem \ref{main.theorem}.

In the rest of the paper we assume sometimes without additional commentary that $(f, E, B)$ is a smooth solution to the relativistic Vlasov-Maxwell system \eqref{vlasov}-\eqref{constraints} which satisfies the assumptions of Theorem \ref{main.theorem}.

\section{Conservation Laws}\label{sec.cons.law}

The solution to the relativistic Vlasov-Maxwell system obeys a pointwise identity:
\bea
\frac{\rd}{\rd t} e+\sum_{k=1}^3 \frac{\rd}{\rd x_k}\left((E\times B)_k + 4\pi\int_{\mathbb R^3} \pel_k  f d\pel\right)=0, \label{cons.id}
\eea
where the energy density $e$ is given by
$$e \eqdef \frac 12 (|E|^2+|B|^2)+4\pi\int_{\mathbb R^3} \pZ f d\pel,$$
with $\pZ$ defined as before in \eqref{vh.def}.

The identity \eqref{cons.id} can be integrated on spacetime regions to obtain conservation laws. 
We will derive two conservation laws that we will use later on. 
First, we integrate in the spacetime region bounded in the past by the initial slice $\{0\}\times \mathbb R^3$ and in the future by a constant time slice $\{t\}\times \mathbb R^3$. Since the initial energy is bounded by the assumptions of Theorem \ref{main.theorem} (see equation \eqref{f.energy}), we obtain

\begin{proposition}\label{cons.law.1}  Solutions to the relativistic Vlasov-Maxwell system \eqref{vlasov}-\eqref{constraints} satisfy
\bea
\frac 12 \int_{\{t\}\times \Rt} (|E|^2+|B|^2)dx+4\pi\int_{\{t\}\times\Rt\times\Rt} \pZ f d\pel dx =\mbox{ constant}.
\notag
\eea
\end{proposition}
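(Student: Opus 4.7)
The plan is to obtain the conservation law by integrating the pointwise identity \eqref{cons.id} over the spacetime slab $[0,t]\times\mathbb{R}^3$ and applying the divergence theorem. Writing \eqref{cons.id} as $\partial_t e + \nabla_x \cdot F = 0$ where $F_k \eqdef (E\times B)_k + 4\pi \int_{\mathbb{R}^3} \pel_k f \, d\pel$, integration in $x$ over $\mathbb{R}^3$ converts the spatial divergence into a boundary integral at infinity. Integration in time from $0$ to $t$ then converts the $\partial_t e$ term into the difference of the energies at time $t$ and time $0$. The assumption in Theorem \ref{main.theorem} that the initial energy \eqref{f.energy} is finite ensures the $t=0$ term is well-defined and bounded.

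The remaining step is to argue that the spatial boundary terms vanish. First I would note that the initial data has compact support in $x$ (both $f_0$ is compactly supported in $(x,\pel)$ and $E_0, B_0$ can be taken to decay sufficiently at infinity via the $H^5$ assumption on the initial data together with the constraints \eqref{constraints}). Since the Vlasov characteristics travel at speed $|\hat{\PL}|<1$ and the Maxwell equations propagate at speed $1$ by finite speed of propagation, both $f(t,\cdot,\cdot)$ and $(E(t,\cdot),B(t,\cdot))$ remain compactly supported in $x$ on the finite time interval $[0,T)$ for a smooth solution. Thus for $R$ large enough (depending on $t$), the flux $F$ vanishes on $\{|x|=R\}$, killing the boundary contribution. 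Alternatively one may truncate with a cutoff $\chi_R(x)$, integrate by parts, and send $R\to\infty$ using the $L^2_x$ decay of $E,B$ and the fact that $\int \pel f \, d\pel$ is integrable in $x$ (controlled by the total energy via $|\hat{\pel}|\leq 1$ so that $|\pel_k|\leq \pel_0$).

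Combining these two observations gives
\beaa
\left[\frac{1}{2}\int_{\mathbb{R}^3}(|E|^2+|B|^2)\,dx + 4\pi \int_{\mathbb{R}^3}\!\int_{\mathbb{R}^3} \pel_0 f\, d\pel\, dx\right]_{t} = \left[\;\cdot\;\right]_{0},
\eeaa
which is the desired conservation. The only mildly subtle point is justifying the vanishing of the spatial flux, but for the smooth, compactly supported solutions considered in Theorem \ref{main.theorem}, finite speed of propagation makes this immediate; no genuine analytic obstacle arises.
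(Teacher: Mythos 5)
Your proposal is correct and is essentially the paper's own argument: the paper obtains Proposition \ref{cons.law.1} precisely by integrating the pointwise identity \eqref{cons.id} over the slab between $\{0\}\times\mathbb{R}^3$ and $\{t\}\times\mathbb{R}^3$ and invoking the finiteness of the initial energy \eqref{f.energy}. The extra care you take with the vanishing of the spatial flux (finite speed of propagation, or truncation and $R\to\infty$) is a detail the paper leaves implicit, and your handling of it is fine.
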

For the second conservation law, we need to control the flux of the electromagnetic field integrated along a backward null cone. To this end, we integrate \eqref{cons.id} in the spacetime region bounded in the past by the initial slice $\{0\}\times \mathbb R^3$ and in the future by the backward null cone $C_{t,x}$ emanating from $(t,x)$, which is defined to be the set from \eqref{light.cone}.

Fix a point $(s,y)\in C_{t,x}$, where we recall that:
\bea
C_{t,x} = \left\{(s,y)\in\mathbb R\times\Rt | ~0\le s\leq t, ~t-s=|y-x|\right\}.
\label{light.cone}
\eea
Denote by $\om$ the outward normal to the 2-sphere $C_{t,x}\cap (\{s\}\times\Rt)$, i.e., 
\bea
\om \eqdef \frac{y-x}{|y-x|}.\label{normal.def}
\eea
The volume form on $C_{t,x}$ can be given in polar coordinates by
\bea
\int_{C_{t,x}} f ~d\sigma 
= 
\int_0^t ds  \int_0^{2\pi}  d\phi  \int_0^{\pi}  (t-s)^2\sin\th d\th  ~  f(s,x+(t-s)\om), \label{cone.vol.form}
\eea
where $\om$ takes the form
\bea
\om = (\sin\th\cos\phi,\sin\th\sin\phi,\cos\th)\label{normal.coord}
\eea
in this coordinate system.

We compute the flux of the electromagnetic field on the null cone, i.e., the boundary term $C_{t,x}$ arising from integrating \eqref{cons.id} by parts. We show that it is non-negative and moreover controls certain components of $E$ and $B$.  Notice\footnote{Here we will use the vector identity
$
a\cdot (b\times c) = b\cdot (c\times a) = c\cdot (a \times b)
$ for $a$, $b$, $c$ three vectors.}
 that
\begin{multline} \notag
\frac 12 \left(|E|^2+|B|^2\right)+\om\cdot\left(E\times B\right)
\\
=\frac 18 \left(2|E\cdot\om|^2+2|B\cdot\om|^2+|E+\om\times B|^2+|E-\om\times B|^2
+|B+\om\times E|^2\right.
\\
\left.+|B-\om\times E|^2-4(\om\times B)\cdot E+ 4(\om\times E)\cdot B\right)
\\
=\frac 14 \left(|E\cdot\om|^2+|B\cdot\om|^2+|E-\om\times B|^2+|B+\om\times E|^2\right).
\end{multline}
This motivates the following definition
\begin{equation}\label{good.K}
K_g^2 \eqdef |E\cdot\om|^2+|B\cdot\om|^2+|E-\om\times B|^2+|B+\om\times E|^2.
\end{equation}
Therefore, by the boundedness of the initial energy, we have proved

\begin{proposition}\label{cons.law.2}  
Solutions to the relativistic Vlasov-Maxwell system \eqref{vlasov}-\eqref{constraints} satisfy
$$
 \frac 14 \int_{C_{t,x}} K_g^2 ~d\sigma
+4\pi \int_{C_{t,x}} \int_{\Rt}  \pZ(1+\vh\cdot\om) f d\pel d\sigma 
 \leq \mbox{ constant}.
$$
\end{proposition}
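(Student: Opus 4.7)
The plan is to integrate the pointwise identity \eqref{cons.id} over the solid backward light cone with apex $(t,x)$, namely
$$\Omega_{t,x} \eqdef \{(s,y) : 0\leq s\leq t,\ |y-x|\leq t-s\},$$
and use the divergence theorem in spacetime. The boundary $\partial \Omega_{t,x}$ consists of the flat disk $\{0\}\times B_t(x)$ on the bottom and the null cone $C_{t,x}$ on the lateral side (the apex is a single point of measure zero). The initial contribution on $B_t(x)$ is bounded by the total initial energy, which is finite by \eqref{f.energy}; the cone contribution will be precisely the quantity we want to bound.

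Rather than computing the null normal in Minkowski geometry directly, the cleanest path is to slice: for $F_k \eqdef (E\times B)_k + 4\pi\int \pel_k f\, d\pel$, the identity \eqref{cons.id} says $\rd_s e + \nabla_y\cdot F = 0$. Differentiating the shrinking-ball energy under the integral sign,
$$\frac{d}{ds}\int_{B_{t-s}(x)} e(s,y)\, dy = \int_{B_{t-s}(x)} \rd_s e\, dy - \int_{\rd B_{t-s}(x)} e\, dA,$$
and applying \eqref{cons.id} together with the divergence theorem on the ball converts the first term to a surface integral of $-F\cdot\om$ over $\rd B_{t-s}(x)$, where $\om$ is the outward unit normal from \eqref{normal.def}. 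Integrating $s\in[0,t]$ (at $s=t$ the ball collapses), one obtains
$$\int_0^t\!\!\int_{\rd B_{t-s}(x)} (e + F\cdot\om)\, dA\, ds \;=\; \int_{B_t(x)} e(0,y)\, dy,$$
and comparing with the cone volume form \eqref{cone.vol.form} shows that the left-hand side is exactly $\int_{C_{t,x}} (e+F\cdot\om)\, d\sigma$.

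It then remains to recognize the integrand on the cone as the sum of the two nonnegative quantities in the proposition. For the electromagnetic part, the algebraic identity already computed in the text preceding \eqref{good.K} gives
$$\tfrac 12(|E|^2+|B|^2) + \om\cdot(E\times B) = \tfrac 14 K_g^2.$$
For the Vlasov part, using $\pel\cdot\om = \pZ\,\vh\cdot\om$ by the definition \eqref{vh.def},
$$4\pi\int_{\Rt}\pZ f\, d\pel + \om\cdot 4\pi\int_{\Rt}\pel f\, d\pel = 4\pi\int_{\Rt} \pZ(1+\vh\cdot\om) f\, d\pel,$$
and the factor $1+\vh\cdot\om$ is pointwise nonnegative since $|\vh|=|\pel|/\pZ<1$ and $|\om|=1$. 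Both summands being nonnegative, the cone integral is bounded above by the (finite) initial energy, yielding the claim.

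The only mildly technical point is the geometric bookkeeping showing that the natural flux integrand on the null cone with respect to $d\sigma$ is $e+F\cdot\om$ (without extra factors of $\sqrt 2$ or $(t-s)$). The slicing computation above circumvents any direct use of a Minkowski normal and co-area formula, so I expect no real obstacle beyond checking that the $(t-s)^2\sin\theta$ sphere area element in \eqref{cone.vol.form} matches the $dA\, ds$ arising from the differentiation.
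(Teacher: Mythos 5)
Your proposal is correct and follows essentially the same route as the paper: integrate the pointwise identity \eqref{cons.id} over the region between the initial slice and the backward null cone, identify the electromagnetic flux with $\tfrac14 K_g^2$ via the algebraic identity preceding \eqref{good.K}, observe that the Vlasov flux $4\pi\int \pZ(1+\vh\cdot\om)f\,d\pel$ is nonnegative, and bound everything by the finite initial energy \eqref{f.energy}. Your slicing computation is just a careful way of carrying out the divergence-theorem step that the paper leaves implicit, and it correctly reproduces the volume form \eqref{cone.vol.form}.
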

Notice that this does not control all the components of $E$ and $B$. As we will see in the later sections, this conservation law will nevertheless be useful in controlling the most singular terms.

Finally, we also need the conservation law for the $L^\infty$ norm of the particle density $f$. This follows from integrating $f$ along the characteristics given by \eqref{char}. Since $f$ is initially bounded by \eqref{ini.bd}, we have
\begin{lemma}\label{cons.law.3}
$
 ||f||_{L^\infty_\pel L^\infty_x}\leq\mbox{ constant}.
$
\end{lemma}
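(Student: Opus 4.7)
The plan is to argue by the method of characteristics, exactly as the paragraph preceding the lemma already suggests. Rewrite the Vlasov equation \eqref{vlasov} as
\begin{equation*}
\bigl(\rd_t + \vh\cdot\nabla_x + (E+\vh\times B)\cdot\nabla_\pel\bigr) f = 0,
\end{equation*}
which expresses $f$ as being transported by the vector field in the $(t,x,\pel)$ variables whose integral curves are precisely the characteristic curves $(X(t),\PL(t))$ defined by \eqref{char}. Consequently, if $t\mapsto(X(t),\PL(t))$ solves \eqref{char}, then $\frac{d}{dt}f(t,X(t),\PL(t))=0$, so $f$ is constant along characteristics.

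Next, I would observe that for a classical $C^1$ solution $(f,E,B)$ on $[0,T)$, the right-hand side of \eqref{char} is $C^1$ in $(X,\PL)$, so the flow $\Phi_{t,s}:(x,\pel)\mapsto(X(t;s,x,\pel),\PL(t;s,x,\pel))$ is a well-defined diffeomorphism of $\Rt_x\times\Rt_\pel$ for all $s,t\in[0,T)$ (no blow-up in finite time on the characteristic level because $|\vh|\leq 1$ and $E,B$ are continuous, so $|\PL(t)|$ grows at most linearly with locally bounded rate on any compact subinterval of $[0,T)$). This yields the explicit representation
\begin{equation*}
f(t,x,\pel)=f_0(\Phi_{0,t}(x,\pel)),\qquad (t,x,\pel)\in[0,T)\times\Rt\times\Rt.
\end{equation*}
Since $\Phi_{0,t}$ is a bijection of $\Rt_x\times\Rt_\pel$, taking $L^\infty_{x,\pel}$ norms gives $\|f(t,\cdot,\cdot)\|_{L^\infty_{x,\pel}}=\|f_0\|_{L^\infty_{x,\pel}}$, which is finite by the assumed regularity of $f_0$ and the associated bound \eqref{ini.bd}. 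This finishes the proof.

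There is no real obstacle here: the entire argument is the standard transport principle for Vlasov-type equations, and the only point requiring a brief justification is that the characteristic ODE \eqref{char} generates a well-defined flow throughout the interval of existence $[0,T)$, which is immediate from the $C^1$ regularity hypothesis on $(E,B)$ and the sublinear growth of $\vh$ in $\PL$.
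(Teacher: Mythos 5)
Your argument is correct and is exactly the one the paper gives: $f$ is constant along the characteristics \eqref{char}, so $\|f(t,\cdot,\cdot)\|_{L^\infty_{x,\pel}}=\|f_0\|_{L^\infty_{x,\pel}}$, which is finite by \eqref{ini.bd}. The extra remarks about the flow being a well-defined diffeomorphism are fine but not needed beyond what the paper's one-line justification already asserts.
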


This completes the statements of all the conservation law estimates that we will use in the later sections.  In the next section we decompose and estimate the electromagnetic fields.

\section{Glassey-Strauss decomposition of the electromagnetic fields}

In order to close our estimates, we need to obtain good control on the electromagnetic fields $E$ and $B$ under the a priori assumptions \eqref{main.assumption.1} - \eqref{main.assumption.3}. To this end, we need to exploit certain structures in the equations. Following \cite{GS86}, we decompose $E$ and $B$ in terms of:
$$4\pi E(t,x)=4\pi E=(E)_0+E_S+E_T,$$
and
$$4\pi B(t,x)=4\pi B=(B)_0+B_S+B_T,$$
where $(E)_0$ and $(B)_0$ depend only on the initial data and the other terms of $E$ are
\bea\label{ET.id}
E_T^i 
&=& -\int_{C_{t,x}}\int_{\Rt} \frac{(\om_i+\vh_i)(1-|\vh|^2)}{(t-s)^2(1+\vh\cdot\om)^2}f ~d\pel\, d\sigma,
\\
\label{ES.id}
E_S^i &=& -\int_{C_{t,x}}\int_{\Rt} \left(\frac{\delta_{ij}-\vh_i\vh_j}{1+\vh\cdot\om}\right)\frac{(E+\vh\times B)_j}{\pZ(t-s)} f d\pel\, d\sigma
\\
\notag
&& +\int_{C_{t,x}}\int_{\Rt} \left(\frac{(\om_i+\vh_i)(\om_j-(\om\cdot\vh)\vh_j)}{(1+\vh\cdot\om)^2}\right)\frac{(E+\vh\times B)_j}{\pZ(t-s)} f d\pel\, d\sigma,
\eea
for $i,j=1,2,3$, where we used the convention that repeated indices are summed over. 
The rest of the $B$ terms are given by
\bea
\label{BT.id}
B_T^i (t,x)&=& \int_{C_{t,x}}\int_{\Rt} \frac{(\om\times\vh)_i(1-|\vh|^2)}{(t-s)^2(1+\vh\cdot\om)^2}f d\pel\, d\sigma,
\\
\label{BS.id}
B_S^i (t,x)&=& -\int_{C_{t,x}}\int_{\Rt} \frac{\big(\om\times(E+\vh\times B)\big)_i}{\pZ(t-s)(1+\vh\cdot\om)} f d\pel\, d\sigma 
\\
\notag
&& +\int_{C_{t,x}}\int_{\Rt} \frac{(\om\times\vh)_i(\vh\cdot (E+\vh\times B))}{\pZ(t-s)(1+\vh\cdot\om)} f d\pel\, d\sigma 
\\
& &-\int_{C_{t,x}}\int_{\Rt} \frac{(\om\times \vh)_i(\om_j-(\om\cdot\vh)\vh_j)}{(1+\vh\cdot\om)^2}\frac{(E+\vh\times B)_j}{\pZ(t-s)} f d\pel\, d\sigma.
\notag
\eea
In these terms, the integration over the cone $C_{t,x}$ can be given in local coordinates as in \eqref{cone.vol.form}. We refer the readers to Theorem 3 in \cite{GS86} for a proof of this decomposition. 

In the remainder of this section, we prove bounds on the integral kernels in each of these terms. We first estimate the kernels in the terms $E_T$ and $B_T$:
\begin{proposition}\label{T.prop}  We have the following estimate
$$
|E_T(t,x)|+|B_T(t,x)|
\ls 
\int_{C_{t,x}}\int_{\Rt} \frac{f(s, x+(t-s)\omega, \pel)}{(t-s)^2\pZ^2(1+\vh\cdot\om)^{\frac 32}} d\pel\, d\sigma.
$$
\end{proposition}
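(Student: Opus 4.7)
The plan is to prove Proposition \ref{T.prop} by a pointwise bound on the integrands of \eqref{ET.id} and \eqref{BT.id}, using three elementary identities for the algebraic factors of $\vh$ and $\om$. The identity we rely on most is
\begin{equation*}
1 - |\vh|^2 = 1 - \frac{|\pel|^2}{\pZ^2} = \frac{1}{\pZ^2},
\end{equation*}
which accounts for the $\pZ^{-2}$ factor in the claimed bound. The remaining task is to show that the vector factors $\om + \vh$ and $\om \times \vh$ each vanish at a $\tfrac12$-power rate as the singular quantity $1 + \vh\cdot\om$ tends to $0$.

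First I would establish the pointwise inequality
\begin{equation*}
|\om + \vh|^2 = 1 + |\vh|^2 + 2\,\vh\cdot\om \;\le\; 4\,(1 + \vh\cdot\om),
\end{equation*}
which reduces to $(|\vh|^2 - 1) + 2(\vh\cdot\om - 1) \le -2\,\vh\cdot\om$, and this holds because $|\vh|^2 \le 1$ and $\vh\cdot\om \le 1$. Next I would bound
\begin{equation*}
|\om \times \vh|^2 = |\vh|^2 - (\vh\cdot\om)^2 = (|\vh| - \vh\cdot\om)(|\vh| + \vh\cdot\om) \;\le\; 2\,(1 + \vh\cdot\om),
\end{equation*}
using $|\vh| \le 1$ on the first factor and $|\vh| \le 1$ on the second. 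Combining these with $1 - |\vh|^2 = \pZ^{-2}$ gives, for the integrand in \eqref{ET.id},
\begin{equation*}
\frac{|\om_i + \vh_i|\,(1-|\vh|^2)}{(t-s)^2\,(1+\vh\cdot\om)^2} \;\le\; \frac{2}{(t-s)^2\,\pZ^2\,(1+\vh\cdot\om)^{3/2}},
\end{equation*}
and an identical bound holds for the integrand in \eqref{BT.id} with $\om \times \vh$ in place of $\om + \vh$ (with constant $\sqrt{2}$ in place of $2$).

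Finally, applying the triangle inequality termwise under the $d\pel\,d\sigma$ integral and using non-negativity of $f$ produces
\begin{equation*}
|E_T(t,x)| + |B_T(t,x)| \;\lesssim\; \int_{C_{t,x}} \int_{\Rt} \frac{f(s,x+(t-s)\om,\pel)}{(t-s)^2\,\pZ^2\,(1+\vh\cdot\om)^{3/2}} \,d\pel\,d\sigma,
\end{equation*}
as claimed. There is no real obstacle here: the only point to verify carefully is that the vector factors $\om + \vh$ and $\om \times \vh$ control the singularity $(1+\vh\cdot\om)^{-2}$ down to $(1+\vh\cdot\om)^{-3/2}$, which is purely algebraic. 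The $\pZ^{-2}$ gain from $1 - |\vh|^2$ is exactly what converts the transport-like kernel into one with weaker angular singularity than the Coulomb-type kernel in $E_S$ and $B_S$.
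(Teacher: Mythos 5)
Your proof is correct and follows essentially the same route as the paper's: the pointwise kernel estimates built from $1-|\vh|^2=\pZ^{-2}$, $|\om+\vh|^2\ls \sing$, and $|\om\times\vh|^2\le 2\sing$ (for the last one the paper argues via the identity $\pZ\sing\ge \frac{1+|\pel\times\om|^2}{2\pZ}$, whereas your factorization $|\vh|^2-(\vh\cdot\om)^2=(|\vh|-\vh\cdot\om)(|\vh|+\vh\cdot\om)$ with $0\le|\vh|-\vh\cdot\om\le 2$ and $0\le|\vh|+\vh\cdot\om\le 1+\vh\cdot\om$ is an equally valid, arguably cleaner, alternative). One slip to fix: the line claiming that $1+|\vh|^2+2\vh\cdot\om\le 4\sing$ ``reduces to $(|\vh|^2-1)+2(\vh\cdot\om-1)\le-2\vh\cdot\om$'' is not an equivalent reformulation (and that displayed inequality is itself false when $\vh\cdot\om$ is close to $1$); the intended bound follows at once from $1+|\vh|^2\le 2$, which in fact gives the sharper $|\om+\vh|^2\le 2\sing$ used in the paper, so the conclusion is unaffected.
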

\begin{proof}
Notice that 
\bea
1-|\vh|^2=\frac{1}{\pZ^2},\label{basic.ineq.1}
\eea
and 
\bea
(\om_i+\vh_i)^2 \leq |\om+\vh|^2 \leq 2+2\vh\cdot\om= 2\sing.\label{basic.ineq.2}
\eea
Thus, we have
$$\frac{(\om_i+\vh_i)(1-|\vh|^2)}{\sing^2}\leq \frac{2}{\pZ^2\sing^{\frac 32}}.$$
Therefore, by \eqref{ET.id},
$$|E_T|\ls \int_{C_{t,x}}\int_{\Rt} \frac{f}{(t-s)^2\pZ^2(1+\vh\cdot\om)^{\frac 32}} d\pel\, d\sigma.$$
To derive an analogous control for $B_T$, we first need to bound $|\vh\times\om|$. To this end, we consider
$$
\pZ(1+\vh\cdot\om)=\sqrt{1+|\pel|^2}+\pel\cdot\om
=\frac{1+|\pel|^2-(\pel\cdot\om)^2}{\pZ-\pel\cdot\om}
\geq\frac{1+|\pel\times\om|^2}{2\pZ}.
$$
This implies,
\bea
|\vh\times\om|^2\leq 2(1+\vh\cdot\om).\label{basic.ineq.3}
\eea
Therefore \eqref{BT.id}, \eqref{basic.ineq.1} and \eqref{basic.ineq.3} together  imply
$$|B_T|\ls \int_{C_{t,x}}\int_{\Rt} \frac{f}{(t-s)^2\pZ^2(1+\vh\cdot\om)^{\frac 32}} d\pel\, d\sigma.$$
This completes the proof.
\end{proof}

We next control the kernels in the terms $E_S$ and $B_S$:
\begin{proposition}\label{S.prop.1}  We have the following estimates
$$|E_S(t,x)|+|B_S(t,x)|\ls 
\int_{C_{t,x}}\int_{\Rt} \frac{|E+\vh\times B|}{(t-s)\pZ(1+\vh\cdot\om)} f ~ d\pel d\sigma.$$
\end{proposition}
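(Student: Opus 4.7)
The plan is to bound each of the five integrands appearing in \eqref{ES.id} and \eqref{BS.id} pointwise by a constant multiple of $\frac{|E+\vh\times B|\,f}{(t-s)\pZ(1+\vh\cdot\om)}$; the triangle inequality and integration over $\Rt\times C_{t,x}$ then yield the claim. Two of the five kernels require essentially nothing. For the first term of $E_S$, since $|\vh|\leq 1$ the symmetric matrix $\delta_{ij}-\vh_i\vh_j$ has operator norm at most $1+|\vh|^2\leq 2$, so its contraction against $(E+\vh\times B)_j$ is controlled by $2|E+\vh\times B|$, and the denominator is already $\sing$. For the first term of $B_S$, $|\om|=1$ gives $|\om\times(E+\vh\times B)|\leq |E+\vh\times B|$, and once again the kernel has the desired form.

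For the remaining three terms---the second term in $E_S$ and the second and third terms in $B_S$---the denominator is $\sing^2$ rather than $\sing$, so the numerator must supply an extra factor of $\sing$. The strategy is to package each numerator as a product of two ``half-singular'' vector factors, each of magnitude $\lesssim\sing^{1/2}$. Two of the three such factors are already controlled by inequalities proved in Proposition \ref{T.prop}, namely
\[
|\om+\vh|\leq \sqrt{2}\,\sing^{1/2}\quad\text{and}\quad |\om\times\vh|\leq \sqrt{2}\,\sing^{1/2},
\]
which absorb every occurrence of the factors $(\om_i+\vh_i)$ and $(\om\times\vh)_i$. The only genuinely new ingredient is the analogous bound for the vector $\om_j-(\om\cdot\vh)\vh_j$ appearing in both the second term of $E_S$ and the third term of $B_S$, which I will derive from the expansion
\[
|\om-(\om\cdot\vh)\vh|^2 = 1-2(\om\cdot\vh)^2+(\om\cdot\vh)^2|\vh|^2 \leq 1-(\om\cdot\vh)^2 = (1-\om\cdot\vh)(1+\om\cdot\vh)\leq 2\sing,
\]
using $|\om\cdot\vh|\leq |\vh|\leq 1$ at the last step.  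For the second term of $B_S$, which contains the scalar $\vh\cdot(E+\vh\times B)$ rather than a projection of this vector, I pair the trivial bound $|\vh\cdot v|\leq |v|$ with $|\om\times\vh|\leq\sqrt{2}\,\sing^{1/2}$.

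With these three half-angle bounds in hand, each of the three more singular kernels picks up two factors of $\sing^{1/2}$ from its numerator, exactly cancelling one power of $\sing$ from the denominator and leaving a net kernel of the shape $\frac{|E+\vh\times B|}{(t-s)\pZ\sing}$ (times an absolute constant). Since the proposition reduces to a short sequence of algebraic inequalities, there is no substantive obstacle; the only step that is not a direct quotation of \eqref{basic.ineq.1}--\eqref{basic.ineq.3} is the bound on $|\om-(\om\cdot\vh)\vh|$, and even that is one line. I would simply organize the writeup by listing all five terms and ticking off the kernel bound for each in turn.
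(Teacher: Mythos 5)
Your proposal is correct and follows essentially the same route as the paper: both proofs reduce the proposition to term-by-term kernel bounds in which the more singular kernels gain a full factor of $\sing$ from the numerator via the three half-power estimates $|\om+\vh|,\ |\om\times\vh|,\ |\om-(\om\cdot\vh)\vh|\ls \sing^{1/2}$ (the paper derives the last of these by writing $\om_j-(\om\cdot\vh)\vh_j=(\om_j+\vh_j)-\sing\vh_j$ and using the triangle inequality with \eqref{basic.ineq.2}, whereas you expand the square directly; both are one line). One bookkeeping slip: the second term of $B_S$ in \eqref{BS.id} carries only $\sing$ in the denominator, not $\sing^2$, so the single factor $|\om\times\vh|\ls\sing^{1/2}$ you extract there yields the (better) bound $\sing^{-1/2}\ls\sing^{-1}$ rather than the $\sing^{-3/2}$ your stated accounting would produce --- the estimate holds, but only because that term is easier than you describe.
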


\begin{proof}
By the definition of $E_S$ and $B_S$ in \eqref{ES.id} and \eqref{BS.id}, it suffices to show that
\bea
|(\vh_i+\om_i)(\om_j-(\om\cdot\vh)\vh_j)|\ls (1+\vh\cdot\om),\label{ES.1}
\eea
and
\bea
|(\vh\times\om)_i(\om_j-(\om\cdot\vh)\vh_j)|\ls (1+\vh\cdot\om).\label{BS.1}
\eea
We have
\beaa
|(\om_j-(\om\cdot\vh)\vh_j)|&=&|\om_j+\vh_j-(1+\vh\cdot\om)\vh_j|\\
&\leq& |\om_j+\vh_j|+|(1+\vh\cdot\om)\vh_j|.
\eeaa
Thus, by \eqref{basic.ineq.2}, we have
$$|(\om_j-(\om\cdot\vh)\vh_j)|\ls (1+\vh\cdot\om)^{\frac 12}.$$
Moreover, by \eqref{basic.ineq.2} and \eqref{basic.ineq.3}, we have
$$|\vh_i+\om_i|+|(\vh\times\om)_i|\ls (1+\vh\cdot\om)^{\frac 12}.$$
Thus \eqref{ES.1} and \eqref{BS.1} follow.
\end{proof}

We also need the following bound for the Lorentz force:
\begin{proposition}\label{S.prop.2}  The Lorentz force is bounded as follows.
$$|E+\vh\times B|\ls |E\cdot\om|+|B\cdot\om|+|B+\om\times E|+(1+\vh\cdot\om)^{\frac 12} |B|.$$
\end{proposition}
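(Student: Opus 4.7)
The plan is to decompose $E + \vh \times B$ into a sum of two pieces: one that is controlled pointwise by components of $K_g$ from \eqref{good.K}, and one that carries a factor of $|\om + \vh|$, which by \eqref{basic.ineq.2} is $\ls (1+\vh\cdot\om)^{1/2}$. Morally, the proposition quantifies the heuristic that the Lorentz force $E + \vh \times B$ is close to $E - \om \times B$ when $\vh$ is nearly antiparallel to $\om$ (i.e.\ precisely in the regime where the singular denominator $\sing$ is small), and $E-\om\times B$ is one of the good components contributing to $K_g^2$.

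Concretely, first I would write $\vh\times B = -\om\times B + (\vh+\om)\times B$, which produces the splitting
\begin{equation*}
E + \vh \times B = (E - \om \times B) + (\vh + \om) \times B.
\end{equation*}
Taking absolute values, using $|\om|=1$, and applying \eqref{basic.ineq.2} then immediately gives $|E+\vh\times B|\ls |E-\om\times B|+(1+\vh\cdot\om)^{1/2}|B|$.

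The remaining task is to express $|E-\om\times B|$ in terms of the "good" quantities $|E\cdot\om|$ and $|B+\om\times E|$ that appear on the right-hand side of the proposition. I would accomplish this using the standard vector identity $\om\times(\om\times E)=(\om\cdot E)\om-E$, which lets me rewrite
\begin{equation*}
E-\om\times B \;=\; (E\cdot\om)\,\om \;-\; \om\times\bigl(B+\om\times E\bigr),
\end{equation*}
so that $|E-\om\times B|\leq |E\cdot\om|+|B+\om\times E|$. Chaining these inequalities yields the claim; in fact the $|B\cdot\om|$ term on the right-hand side of the proposition is not needed. There is no real obstacle here: the argument is essentially one algebraic identity per good component, and the only conceptual point is choosing the splitting so that the leftover term $(\vh+\om)\times B$ carries exactly the factor $|\vh+\om|$ that \eqref{basic.ineq.2} knows how to bound.
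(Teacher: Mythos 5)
Your proof is correct, but it takes a different decomposition from the paper's. The paper first splits the whole Lorentz force into its components parallel and perpendicular to $\om$, via $|E+\vh\times B|\leq|\om\cdot(E+\vh\times B)|+|\om\times(E+\vh\times B)|$; the parallel part produces $|E\cdot\om|+|(\om\times\vh)\cdot B|$ (whence the $(1+\vh\cdot\om)^{1/2}|B|$ term via \eqref{basic.ineq.3}), and the perpendicular part is expanded with the $a\times(b\times c)$ identity to give $|B+\om\times E|+(1+\vh\cdot\om)|B|+|\om\cdot B|$ --- this is where the $|B\cdot\om|$ term on the right-hand side comes from. You instead write $E+\vh\times B=(E-\om\times B)+(\vh+\om)\times B$, bound the remainder by $|\vh+\om|\,|B|\ls(1+\vh\cdot\om)^{1/2}|B|$ using \eqref{basic.ineq.2}, and then convert $E-\om\times B$ into the flux components via the identity $E-\om\times B=(E\cdot\om)\om-\om\times(B+\om\times E)$, which checks out since $\om\times(\om\times E)=(\om\cdot E)\om-E$. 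Both arguments are a few lines of vector algebra of comparable length; yours has the minor advantages of yielding a slightly sharper bound (the $|B\cdot\om|$ term is indeed superfluous, though harmless since the proposition is stated as an upper bound) and of making explicit the heuristic that the singular regime $\sing\ll1$ forces the Lorentz force to align with the good component $E-\om\times B$ appearing in $K_g$. Either version feeds into Proposition \ref{S.prop.3} equally well.
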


\begin{proof}
Clearly, we have
\bea
|E+\vh\times B|\leq |\om\cdot(E+\vh\times B)|+|\om\times(E+\vh\times B)|.
\label{Lorentz.0}
\eea
For the first term,
we have
\bea
|\om\cdot(E+\vh\times B)|\leq |\om\cdot E|+|\om\cdot(\vh\times B)|=|\om\cdot E|+|(\om\times\vh)\cdot B)|.\label{Lorentz.1}
\eea
By \eqref{basic.ineq.3}, we have
$$|\vh\times\om|\ls (1+\vh\cdot\om)^{\frac 12}.$$
Therefore, the bound derived in \eqref{Lorentz.1} is acceptable. 

We now estimate the second term\footnote{Here we will use the vector identity
$
a\times (b\times c) = b(a\cdot c) - c (a \cdot b)
$ for $a$, $b$, $c$ three vectors.}
 in \eqref{Lorentz.0}.
\beaa
|\om\times(E+\vh\times B)|&=&|\om\times E+B-(1+\vh\cdot\om)B+(\om\cdot B)\vh|\\
&\leq &|\om\times E+B|+(1+\vh\cdot\om)|B|+|\om\cdot B|,
\eeaa
and therefore all of the terms are acceptable.
\end{proof}
Propositions \ref{S.prop.1} and \ref{S.prop.2} together imply that the $E_S$ and $B_S$ terms can be decomposed into a less singular term, and a more singular term which is coupled only to the components of the electromagnetic field that can be controlled by the flux. More precisely,
\begin{proposition}\label{S.prop.3}
We can decompose 
$$E_S=E_{S,1}+E_{S,2}$$
and
$$B_S=B_{S,1}+B_{S,2}$$
such that 
$$
\left(|E_{S,1}|+|B_{S,1}|\right)(t,x)\ls \int_{C_{t,x}}\int_{\Rt} \frac{|B| f(s, x+(t-s)\omega, \pel)}{(t-s)\pZ(1+\vh\cdot\om)^{\frac 12}} d\pel\, d\sigma
$$
and 
$$
|E_{S,2}|+|B_{S,2}|\ls \int_{C_{t,x}}\int_{\Rt} \frac{\left( |E\cdot\om|+|B\cdot\om|+|B+\om\times E| \right) f}{(t-s)\pZ(1+\vh\cdot\om)} d\pel\, d\sigma,
$$
where the last upper bound is more singular but it contains the components of the electromagnetic field that can be controlled by the flux terms in Proposition \ref{cons.law.2}.
\end{proposition}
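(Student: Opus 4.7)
The proof combines the integral bound in Proposition \ref{S.prop.1} with a refinement of Proposition \ref{S.prop.2}, promoted from a pointwise inequality to an exact vector identity for the Lorentz force, which then induces the claimed splitting of $E_S$ and $B_S$ by linearity.

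First, I would sharpen the proof of Proposition \ref{S.prop.2}. Applying the orthogonal decomposition $v = (\om\cdot v)\om - \om\times(\om\times v)$ to $v = E+\vh\times B$, expanding $\om\times(\vh\times B) = \vh(\om\cdot B) - B(\om\cdot\vh)$, and rewriting $\om\cdot\vh = \sing - 1$ yields the exact identity $E+\vh\times B = L_1 + L_2$, where
\[
L_1 := \sing\,(\om\times B) + \bigl[(\om\times\vh)\cdot B\bigr]\om,
\qquad
L_2 := (\om\cdot E)\,\om - \om\times(\om\times E + B) - (\om\cdot B)\,(\om\times\vh).
\]
Using $|\om\times\vh|\ls \sing^{1/2}$ (from \eqref{basic.ineq.3}) together with $\sing\leq 2$, one checks directly that $|L_1|\ls \sing^{1/2}|B|$, while termwise $|L_2|\ls |E\cdot\om|+|B\cdot\om|+|B+\om\times E|$.

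Next, I would define $E_{S,i}$ (resp.\ $B_{S,i}$) as the integral in \eqref{ES.id} (resp.\ \eqref{BS.id}) obtained by replacing $(E+\vh\times B)_j$ by $(L_i)_j$ everywhere it occurs --- including inside the scalar $\vh\cdot(E+\vh\times B)$ in the middle term of \eqref{BS.id}, where one uses $|\vh\cdot L_i|\leq |L_i|$. By linearity $E_S = E_{S,1}+E_{S,2}$ and $B_S = B_{S,1}+B_{S,2}$. Then I would rerun the kernel estimates \eqref{ES.1} and \eqref{BS.1} from the proof of Proposition \ref{S.prop.1} on each piece. On the $L_2$ piece the calculation reproduces Proposition \ref{S.prop.1} with $|L_2|$ in place of $|E+\vh\times B|$, giving the stated bound for $E_{S,2}$ and $B_{S,2}$. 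On the $L_1$ piece the kernel still contributes $1/[(t-s)\pZ\sing]$, but the factor $|L_1|\ls \sing^{1/2}|B|$ trades one half-power of the singularity for $|B|$, producing the integrand $\ls |B|/[(t-s)\pZ\sing^{1/2}]$ claimed for $E_{S,1}$ and $B_{S,1}$.

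The only subtlety is arranging the algebraic identity so that every occurrence of the full vector $B$ in $L_1$ comes paired with at least a $\sing^{1/2}$ factor --- this is the precise gain that converts the $\sing^{-1}$ singularity of $E_S$ into the weaker $\sing^{-1/2}$ in $E_{S,1}$, and is what makes the subsequent separate treatment of the two types of terms by different methods possible in later sections.
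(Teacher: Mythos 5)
Your proposal is correct and follows essentially the same route as the paper: the paper derives Proposition \ref{S.prop.3} directly from Propositions \ref{S.prop.1} and \ref{S.prop.2}, whose proof already contains the identity $\om\times(E+\vh\times B)=\om\times E+B-\sing B+(\om\cdot B)\vh$ on which your splitting $L_1+L_2$ is based. You merely make explicit the exact vector decomposition of the Lorentz force (rather than only a pointwise bound) that is needed to define $E_{S,1},E_{S,2},B_{S,1},B_{S,2}$ by linearity, which the paper leaves implicit; this is a correct and slightly more careful rendering of the same argument.
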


\section{Main estimates for the particle density}

One of the main challenges in obtaining control for $E$ and $B$ is the presence of the singularities in the kernels. In this section, we estimate the singularities under the assumptions \eqref{main.assumption.1} - \eqref{main.assumption.3}. This will allow us to obtain the main estimate in Proposition \ref{main.est} for the quantity $\int_{\Rt}\frac{f}{\pZ (1+\vh\cdot\om)} d\pel$.

Recall that $e_3=(0,0,1)$ is the standard basis vector. We will need to obtain improved estimates when  taking into account the angles between the vectors $\frac{\pel}{|\pel|}$, $\om$ and $e_3$. We define the following notation to denote angles between two unit vectors:

\begin{definition}
Let $V_1$, $V_2$ be two unit vectors in $\Rt$. Let $\angle(V_1,V_2)$ be the unique number in the range $[0,\pi]$ such that
$$V_1\cdot V_2 \eqdef \cos\left(\angle(V_1,V_2)\right).$$
In particular, the angles $\angle$ are always defined to be positive.
\end{definition}

We also define $\angle(V_1,\pm V_2)$ to be the minimum of the two angles, i.e.,
\begin{definition}
 $\angle(V_1,\pm V_2)\eqdef \min\{\angle(V_1, V_2),\angle(V_1,-V_2)\}.$
\end{definition}
\begin{remark}
In particular, we observe that
$$
\angle(V_1,\pm V_2)
=
\min\{\angle(V_1, V_2),\pi -\angle(V_1,V_2)\}
$$
and 
we thus have $\angle(V_1,\pm V_2)\in [0,\frac{\pi}{2}]$.
\end{remark}

We now proceed to derive bounds for the singularity.
First, we have the following trivial pointwise estimate for the singularity:
\begin{proposition}\label{trivial.sing.est}
$
{\sing}^{-1}\ls \min\{\pZ^2,(\angle(\frac{\pel}{|\pel|},-\om))^{-2}  \}.
$
\end{proposition}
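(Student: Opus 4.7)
\textbf{Proof proposal for Proposition \ref{trivial.sing.est}.}

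The plan is to verify the two bounds separately, as both follow by direct elementary manipulations once $\hat{p}\cdot\omega$ is rewritten appropriately.

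First, for the bound $(1+\hat{p}\cdot\omega)^{-1}\lesssim \pZ^2$, I would start from $\hat{p}\cdot\omega\geq -|\hat{p}|=-|\pel|/\pZ$, so that
\[
1+\hat{p}\cdot\omega\;\geq\;\frac{\pZ-|\pel|}{\pZ}\;=\;\frac{\pZ^2-|\pel|^2}{\pZ(\pZ+|\pel|)}\;=\;\frac{1}{\pZ(\pZ+|\pel|)}\;\geq\;\frac{1}{2\pZ^2},
\]
using $\pZ^2-|\pel|^2=1$ and $\pZ+|\pel|\leq 2\pZ$. Taking reciprocals gives the first half of the claim.

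For the angular bound, let $\alpha\eqdef\angle\!\bigl(\tfrac{\pel}{|\pel|},-\om\bigr)\in[0,\pi]$, so that $\tfrac{\pel}{|\pel|}\cdot\om=-\cos\alpha$ and hence
\[
1+\hat{p}\cdot\om \;=\; 1-\frac{|\pel|}{\pZ}\cos\alpha.
\]
Split into two cases. If $\alpha\in[\pi/2,\pi]$, then $\cos\alpha\leq 0$ and the right-hand side is at least $1$, which already dominates $\alpha^{-2}/\pi^2$. If instead $\alpha\in[0,\pi/2]$, then since $|\pel|/\pZ<1$ and $\cos\alpha\geq 0$, one has
\[
1-\frac{|\pel|}{\pZ}\cos\alpha\;\geq\;1-\cos\alpha\;=\;2\sin^2(\alpha/2)\;\gtrsim\;\alpha^2,
\]
using the standard lower bound $\sin(\alpha/2)\gtrsim \alpha$ on $[0,\pi/2]$. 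In both cases $1+\hat{p}\cdot\om\gtrsim \alpha^2$, which is the second half of the claim.

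There is no real obstacle; the only thing to be mindful of is that $\alpha$ ranges over all of $[0,\pi]$ (it is the ordinary angle, not the $\pm$-version defined just before the proposition), so the case split at $\alpha=\pi/2$ is needed to avoid losing a sign on $\cos\alpha$. Combining the two estimates and taking the minimum yields Proposition \ref{trivial.sing.est}.
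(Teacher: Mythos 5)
Your proposal is correct and follows essentially the same route as the paper: the $\pZ^2$ bound via rationalizing $\pZ-|\pel|$ using $\pZ^2-|\pel|^2=1$, and the angular bound via writing $1+\vh\cdot\om=1-|\vh|\cos\alpha$ and comparing with $1-\cos\alpha\gtrsim\alpha^2$. Your case split at $\alpha=\pi/2$ simply supplies the details the paper leaves implicit in its one-line second estimate.
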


\begin{proof}
We first show that the singularity can be estimated by the size of $|\pel|$:
\begin{multline}\notag
\frac{1}{\sing}\leq \frac{1}{1-|\vh|}
=\frac{\sqrt{1+|\pel|^2}}{\sqrt{1+|\pel|^2}-|\pel|} \cdot\frac{\sqrt{1+|\pel|^2}+|\pel|}{\sqrt{1+|\pel|^2}+|\pel|}
\\
\leq  2(1+|\pel|^2)
=2 \pZ^2.
\end{multline}
Notice that this in particular shows that this singularity in the direction anti-parallel to $\pel$ is bounded above by the square of the supremum of the momentum support.

We then show the bound by the squared inverse of the angle between $\frac{\pel}{|\pel|}$ and $\om$:
$$
\frac{1}{\sing}
= \frac{1}{1-|\vh| \cos (\angle(\frac{\pel}{|\pel|},-\om))}
\ls (\angle(\frac{\pel}{|\pel|},-\om))^{-2}.
$$
This completes the proof.
\end{proof}

Using the assumption \eqref{main.assumption.1} on the support of $f$, we have an a priori bound for the size of $|\pel|$. This will allow us to apply Proposition \ref{trivial.sing.est} to control the singularity.

\begin{proposition}\label{v.est}  On the support of $f$, we have the estimate
\bea
|\pel|\ls \frac{\kappa(t,\gamma(\pel))}{\angle(\frac{\pel}{|\pel|},\pm e_3)},\notag
\eea
where $\gamma=\gamma(\pel)$ is defined implicitly by
$$(\pel_1,\pel_2,\pel_3)=\left(\sqrt{\pel_1^2+\pel_2^2}\cos\gamma, \sqrt{\pel_1^2+\pel_2^2}\sin\gamma,\pel_3\right).$$
\end{proposition}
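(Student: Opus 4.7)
The plan is to express $\sqrt{\pel_1^2+\pel_2^2}$ both in terms of $|\pel|$ and the angle $\angle(\pel/|\pel|,\pm e_3)$ (which gives the geometric factor) and in terms of $\kappa$ (which uses the a priori assumption \eqref{main.assumption.1}), then combine.

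First I would use the definition of the angle. Writing $\pel/|\pel|$ in spherical coordinates relative to $e_3$, one has $\cos(\angle(\pel/|\pel|,e_3))=\pel_3/|\pel|$, so
$$\sqrt{\pel_1^2+\pel_2^2} \;=\; |\pel|\,\sin(\angle(\pel/|\pel|,e_3)).$$
Because $\sin(\pi-x)=\sin x$, the right-hand side is unchanged if we replace $\angle(\pel/|\pel|,e_3)$ by $\angle(\pel/|\pel|,\pm e_3)\in[0,\pi/2]$. On this interval, the elementary inequality $\sin x\geq \tfrac{2}{\pi}x$ gives
$$\sqrt{\pel_1^2+\pel_2^2} \;\geq\; \tfrac{2}{\pi}\,|\pel|\,\angle(\pel/|\pel|,\pm e_3).$$

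Next I would invoke the support hypothesis \eqref{main.assumption.1}. On the support of $f$, the definition of $\gamma(\pel)$ gives $\pel_2/\pel_1 = \tan\gamma(\pel)$, so \eqref{main.assumption.1} yields
$$\sqrt{\pel_1^2+\pel_2^2}\;\leq\;\kappa(t,\gamma(\pel)).$$
Combining the two displays proves
$$|\pel|\;\leq\; \tfrac{\pi}{2}\,\frac{\kappa(t,\gamma(\pel))}{\angle(\pel/|\pel|,\pm e_3)},$$
which is the claimed estimate.

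There is no substantive obstacle; the only points requiring care are the symmetrization between $e_3$ and $-e_3$ (handled by $\sin(\pi-x)=\sin x$, so that the orientation does not matter) and the replacement of $\sin$ by the angle itself, which is legitimate precisely because $\angle(\pel/|\pel|,\pm e_3)\in[0,\pi/2]$ by definition.
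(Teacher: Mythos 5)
Your proposal is correct and follows essentially the same route as the paper: both derive $\sqrt{\pel_1^2+\pel_2^2}=|\pel|\sin(\angle(\frac{\pel}{|\pel|},\pm e_3))$, invoke \eqref{main.assumption.1} for the upper bound by $\kappa(t,\gamma(\pel))$, and use $\sin x\gtrsim x$ on $[0,\frac{\pi}{2}]$ to replace the sine by the angle. Your version merely makes the implicit constant $\frac{\pi}{2}$ explicit.
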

\begin{proof}
The main assumption \eqref{main.assumption.1} of Theorem \ref{main.theorem}  implies that
$$\sqrt{\pel_1^2+\pel_2^2}\leq \kappa(t,\gamma(\pel))$$
on the support of $f$.

The angle $\angle(\frac{\pel}{|\pel|},\pm e_3)$ is given by
$$\sin(\angle(\frac{\pel}{|\pel|},e_3))=\sin(\angle(\frac{\pel}{|\pel|},- e_3))=\frac{\sqrt{\pel_1^2+\pel_2^2}}{\sqrt{\pel_1^2+\pel_2^2+\pel_3^2}}.$$
Therefore, since $\angle(\frac{\pel}{|\pel|}, \pm e_3)\in [0,\frac{\pi}{2}]$, we have
$$\sqrt{\pel_1^2+\pel_2^2+\pel_3^2}\leq \frac{\kappa(t,\gamma(\pel))}{\sin(\angle(\frac{\pel}{|\pel|},\pm e_3))}
\ls 
\frac{\kappa(t,\gamma(\pel))}{\angle(\frac{\pel}{|\pel|},\pm e_3)},$$
as claimed.
\end{proof}

The estimates in Propositions \ref{trivial.sing.est} and \ref{v.est} imply 

\begin{proposition}\label{sing.est}  We have the following estimate
$${\sing}^{-1}\ls \min\big\{\kappa^2(t,\gamma(\pel))(\angle(\frac{\pel}{|\pel|},\pm e_3))^{-2},(\angle(\frac{\pel}{|\pel|},\pm \om))^{-2}  \big\}.$$
\end{proposition}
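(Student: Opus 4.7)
The plan is to combine the two pointwise estimates from Propositions \ref{trivial.sing.est} and \ref{v.est} directly; Proposition \ref{sing.est} is essentially their conjunction after a small algebraic clean-up. I would prove the two bounds inside the minimum separately and then take their minimum.

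For the bound $(\sing)^{-1}\ls \kappa^2(t,\gamma(\pel))\,(\angle(\tfrac{\pel}{|\pel|},\pm e_3))^{-2}$, the plan is to start from the first alternative in Proposition \ref{trivial.sing.est}, namely $(\sing)^{-1}\ls \pZ^2 = 1+|\pel|^2$. On the support of $f$, Proposition \ref{v.est} gives $|\pel|^2 \ls \kappa^2(t,\gamma(\pel))(\angle(\tfrac{\pel}{|\pel|},\pm e_3))^{-2}$. The only thing to verify is that the ``$+1$'' can be absorbed into the right-hand side; this uses the standing hypothesis $\kappa>1$ from \eqref{main.assumption.1}, together with $\angle(\tfrac{\pel}{|\pel|},\pm e_3)\in[0,\tfrac{\pi}{2}]$, so that $\kappa^2/\angle^2\gtrsim 1$.

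For the bound $(\sing)^{-1}\ls (\angle(\tfrac{\pel}{|\pel|},\pm\om))^{-2}$, I would apply the second alternative in Proposition \ref{trivial.sing.est}, which gives $(\sing)^{-1}\ls (\angle(\tfrac{\pel}{|\pel|},-\om))^{-2}$. By definition $\angle(\tfrac{\pel}{|\pel|},\pm\om)=\min\{\angle(\tfrac{\pel}{|\pel|},\om),\angle(\tfrac{\pel}{|\pel|},-\om)\}\le \angle(\tfrac{\pel}{|\pel|},-\om)$, hence $(\angle(\tfrac{\pel}{|\pel|},\pm\om))^{-2}\ge (\angle(\tfrac{\pel}{|\pel|},-\om))^{-2}$ and the desired weaker bound follows immediately.

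Taking the minimum of the two bounds yields Proposition \ref{sing.est}. No serious obstacle is expected; the only mild subtlety is the absorption of the constant ``$+1$'' noted above, and the one-line observation that the symmetrized angle $\angle(\cdot,\pm\om)$ is no larger than $\angle(\cdot,-\om)$.
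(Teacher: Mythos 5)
Your proposal is correct and is essentially the paper's own argument: the paper likewise deduces $\pZ\ls \kappa(t,\gamma(\pel))\,(\angle(\frac{\pel}{|\pel|},\pm e_3))^{-1}$ from Proposition \ref{v.est} (absorbing the constant via $\kappa>1$ and $\angle(\cdot,\pm e_3)\le\frac{\pi}{2}$) and then concludes by Proposition \ref{trivial.sing.est}. Your explicit remarks on absorbing the ``$+1$'' and on $\angle(\cdot,\pm\om)\le\angle(\cdot,-\om)$ are accurate details that the paper leaves implicit.
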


\begin{proof}
By Proposition \ref{v.est}, we have that
$$
\pZ \ls \frac{\kappa(t,\gamma(\pel))}{\angle(\frac{\pel}{|\pel|},\pm e_3)}.
$$
The conclusion then follows from Proposition \ref{trivial.sing.est}.
\end{proof}

With the preparatory bounds above, we can now state and prove our main estimates for the particle density. This will then be used to control the electromagnetic fields in the next section.

\begin{proposition}[Main estimate]\label{main.est}  Recalling \eqref{main.assumption.2} for $r \ge 0$ we have the estimate
\bea
\int \frac{f(t,x+r\om,\pel)}{\pZ \sing} d\pel \ls 
\min\left\{P(t)^2\log P(t), \frac{A(t)^4\log P(t)}{(\angle(e_3,\pm \om))^{2}} \right\}.
\eea
\end{proposition}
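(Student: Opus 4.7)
The plan is to prove the two bounds in the minimum separately, each exploiting a different estimate for $\sing^{-1}$. Throughout I use $\|f\|_{L^\infty}\ls 1$ from Lemma~\ref{cons.law.3} and $|\pel|\leq P(t)$ from the definition of $P$, together with the fact that the support conditions on $f$ in $\pel$ coming from \eqref{main.assumption.1} and from $P(t)$ are uniform in $x$, so that the value of the spatial argument $x+r\om$ of $f$ plays no role in the estimates.

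For the first bound $P(t)^2\log P(t)$, I would work in spherical coordinates $(\rho,\beta,\phi)$ on $\mathbb R^3_\pel$ centered about $-\om$, so that $\beta=\angle(\frac{\pel}{|\pel|},-\om)$ and $d\pel=\rho^2\,d\rho\,\sin\beta\,d\beta\,d\phi$. Using $\sing^{-1}\ls\min(\pZ^2,\beta^{-2})$ from Proposition~\ref{trivial.sing.est} and the elementary inequality $\rho^2/\pZ\ls\rho$, the $\beta$-integral splits at the transition scale $\beta=1/\pZ$: the inner region $\beta\leq 1/\pZ$ contributes $O(1)$ via $\int_0^{1/\pZ}\sin\beta\cdot\pZ^2\,d\beta\ls 1$, while the outer region $\beta\geq 1/\pZ$ contributes $O(\log\pZ)$ via $\int_{1/\pZ}^{\pi}\sin\beta/\beta^2\,d\beta\ls\log\pZ$. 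The remaining radial integral is $\int_0^{P}\rho\log\rho\,d\rho\ls P^2\log P$.

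For the second bound, write $\alpha=\angle(e_3,\pm\om)$, $\tilde\theta=\angle(\frac{\pel}{|\pel|},\pm e_3)$, and $\beta=\angle(\frac{\pel}{|\pel|},\pm\om)$. The key observation is that $\angle(\cdot,\pm\cdot)$ is a genuine metric on $\mathbb{RP}^2$, hence satisfies the triangle inequality $\alpha\leq\tilde\theta+\beta$; therefore at least one of $\tilde\theta,\beta$ is $\geq\alpha/2$. Applying whichever of the two alternatives in Proposition~\ref{sing.est} is active (and using $\kappa\geq 1$) yields the single uniform pointwise bound
\[
\sing^{-1}\ls\frac{\kappa(t,\gamma(\pel))^2}{\alpha^2}\quad\text{on }\mathrm{supp}\,f.
\]
I would then switch to cylindrical coordinates $(\varrho,\gamma,u)$ adapted to the $e_3$-axis, with $\varrho=\sqrt{\pel_1^2+\pel_2^2}$, $u=\pel_3$, and $d\pel=\varrho\,d\varrho\,du\,d\gamma$. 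In these coordinates \eqref{main.assumption.1} is exactly $\varrho\leq\kappa(t,\gamma)$ on $\mathrm{supp}\,f$, while $|u|\leq P(t)$. Pulling the $\gamma$-dependent factor $\kappa^2/\alpha^2$ outside and bounding $f$ in $L^\infty$, it suffices to estimate
\[
\frac{1}{\alpha^2}\int_0^{2\pi}\kappa(t,\gamma)^2\,d\gamma\int_0^{\kappa(t,\gamma)}\varrho\,d\varrho\int_{-P(t)}^{P(t)}\frac{du}{\sqrt{1+\varrho^2+u^2}}.
\]
The inner $u$-integral equals $2\sinh^{-1}(P/\sqrt{1+\varrho^2})\ls\log P$, the $\varrho$-integral produces $\kappa(t,\gamma)^2/2$, and a final $\gamma$-integration yields $\int_0^{2\pi}\kappa^4\,d\gamma=A(t)^4$, giving the desired bound $A(t)^4\log P(t)/\alpha^2$.

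The main conceptual input is the $\mathbb{RP}^2$-triangle-inequality step, which produces a \emph{single} uniform pointwise bound for $\sing^{-1}$ in terms of $\alpha$ and $\kappa(\gamma)$ alone, thereby avoiding any splitting of momentum space into subregions. Once this observation is made, the cylindrical parametrization is perfectly matched to the hypothesis \eqref{main.assumption.1} and the rest of the proof reduces to elementary calculus.
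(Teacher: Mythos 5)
Your proof is correct. The first bound is obtained essentially as in the paper: polar coordinates about $-\om$, the dichotomy ${\sing}^{-1}\ls \min\{\pZ^2,(\th')^{-2}\}$ from Proposition \ref{trivial.sing.est}, and a splitting of the angular integral at the transition scale (you split at $\pZ^{-1}$, the paper at $P^{-1}$; both yield $P^2\log P$). For the second bound the geometric input is also the same --- the triangle inequality forces at least one of $\angle(\frac{\pel}{|\pel|},\pm e_3)$, $\angle(\frac{\pel}{|\pel|},\pm\om)$ to be at least $\frac12\angle(e_3,\pm\om)$ --- but the packaging differs. The paper turns this into a decomposition of momentum space into two regions treated in different coordinate systems: Cartesian/cylindrical in region $I$ (yielding $A^2\log P\,\alpha^{-2}$ with $\alpha=\angle(e_3,\pm\om)$), and polar coordinates about $e_3$ in region $II$, where the sharper support bound $|\pel|\ls\kappa/\angle(\frac{\pel}{|\pel|},\pm e_3)$ of Proposition \ref{v.est} is exploited to give $A^4\alpha^{-2}$ with no logarithm. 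You instead absorb the dichotomy into the single uniform pointwise bound ${\sing}^{-1}\ls\kappa^2(t,\gamma(\pel))\,\alpha^{-2}$ on $\mathrm{supp}\,f$ and perform one cylindrical integration over $\{\varrho\leq\kappa(t,\gamma),\ |\pel_3|\leq P\}$. This is a modest but genuine streamlining: it eliminates the case analysis and the second coordinate system, at the price of being lossier in each region ($A^4\log P$ everywhere rather than $A^2\log P$ or $A^4$), which is harmless since the stated estimate only requires $A^4\log P\,\alpha^{-2}$. The one cosmetic slip is the radial integral $\int_0^P\rho\log\rho\,d\rho$ in your first bound: the angular integration produces a factor $1+\log\pZ\geq 1$, not $\log\rho$ (which is negative near $\rho=0$), so it should read $\int_0^P\rho\,(1+\log\pZ)\,d\rho\ls P^2\log P$; the conclusion is unaffected.
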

\begin{proof}
In the proof below, for notational purposes, we will suppress the explicit dependence of $P(t)$ on $t$ and write $P$ instead.

We first show that 
$$\int \frac{f}{\pZ \sing} d\pel \ls P^2\log P,\quad\mbox{for every }\om.$$
To this end, for every fixed $\om$, 
we use polar coordinates $(\theta',\phi')$ such that $\theta'$ is chosen for $-\om$ to lie on the $\theta'=0$ half-axis, i.e.,
$$\pel\cdot (-\om)=|\pel| \cos\theta'. $$
(This choice of the polar coordinate system is different from that in \eqref{cone.vol.form}.)

In the $(|\pel|,\theta',\phi')$ coordinate system, we can bound using Proposition \ref{trivial.sing.est}
$$
{\sing}^{-1}\ls \min\{\pZ^2,(\angle(\frac{\pel}{|\pel|}, -\om))^{-2}\}\ls \min\{\pZ ^2,(\th')^{-2}\}.
$$
We will use the bound $\frac{1}{\sing}\ls \pZ ^2$ for $0\leq \theta'\leq P^{-1}$ and use $\frac{1}{\sing}\ls (\theta')^{-2}$ for $P^{-1}\leq \theta' \leq \pi$. More precisely, by the conservation law in Lemma \ref{cons.law.3},
we have
\begin{multline} \notag
\left|\int \frac{f}{\pZ  \sing} d\pel \right|
\ls \int_{|\pel|\leq P} \frac{1}{\pZ  \sing} d\pel
\\
\ls \int_0^P |\pel|^2 d|\pel|  \int_0^{\pi} \sin\theta' d\theta'  \int_0^{2\pi} d\phi'  \frac{1}{\pZ  \sing} 
\\
\ls \int_0^P |\pel| d|\pel| \int_{P^{-1}}^{\pi} (\th')^{-2} \sin\th'  d\th' 
 +
\int_0^P (|\pel|+|\pel|^3)d|\pel|  \int_{0}^{P^{-1}} \sin\th'  d\th' 
\\
\ls P^2\log P,
\end{multline}
as desired.

We will now show that
\begin{equation}\label{clam.est}
\int \frac{f}{\pZ  \sing} d\pel \ls 
A(t)^2 \log P(\angle(e_3,\pm\om))^{-2}
+A(t)^4 (\angle(e_3,\pm\om))^{-2}.
\end{equation}
Notice that we assumed $\kappa>1$ which guarantees $A(t)^2 \ls A(t)^4$.  
For notational convenience, let $\beta \eqdef \angle(e_3,\pm\om)=\min\{\angle(e_3,\om),\angle(e_3,-\om)\}$. In particular, $\beta\leq \frac{\pi}{2}$.
 
To estimate $\int \frac{f}{\pZ  \sing} d\pel$, we divide the domain of integration into the following regions according to the size of the angles $\angle(\frac{\pel}{|\pel|},\pm e_3)$:
$$
I \eqdef \left\{\angle(\frac{\pel}{|\pel|},e_3)\leq \frac{\beta}{2}\right\}\cup\left\{\angle(\frac{\pel}{|\pel|},-e_3)\leq \frac{\beta}{2}\right\},
$$
$$II \eqdef \left\{\angle(\frac{\pel}{|\pel|},e_3)\geq \frac{\beta}{2}\right\}\cap\left\{\angle(\frac{\pel}{|\pel|},-e_3)\geq \frac{\beta}{2}\right\}.
$$
We will first estimate region $I$.

On region $I$, 
in the case $\angle(\frac{\pel}{|\pel|},e_3)\leq \frac{\beta}{2}$, we have by the triangle inequality,
$$\angle(\frac{\pel}{|\pel|},\om)\geq |\angle(\om,e_3)-\angle(\frac{\pel}{|\pel|},e_3)|\geq\frac{\beta}{2}.$$
In the case $\angle(\frac{\pel}{|\pel|},-e_3)\leq \frac{\beta}{2}$, we again have, by the triangle inequality,
$$\angle(\frac{\pel}{|\pel|},\om)\geq |\angle(\om,-e_3)-\angle(\frac{\pel}{|\pel|},-e_3)|\geq\frac{\beta}{2}.$$
Similarly, we have
$$\angle(\frac{\pel}{|\pel|},-\om)\geq \frac{\beta}{2}.$$
Thus, by Proposition \ref{sing.est}, we have that
\bea
\frac{1}{\sing}\ls \beta^{-2}. \label{I.sing.bound}
\eea
Therefore, estimating the singularity pointwise by \eqref{I.sing.bound} and bounding the remaining integral in Cartesian coordinates,
we have
\begin{multline} \label{I.est}
\int_I \frac{f}{\pZ  \sing} d\pel 
\ls  \beta^{-2}\int_{D} \frac{1}{\pZ } d\pel
\\
\ls  \beta^{-2}\int_{-P}^{P}   \frac{d\pel_3}{\sqrt{1+\pel_3^2}}   \iint_{D'}d\pel_1 d\pel_2 
\ls  A(t)^2 \beta^{-2} \log P,
\end{multline}
where $D$ is the subset of $\mathbb R^3$ given by 
$$D\eqdef \{p: f(t,x,\pel)\neq 0 \mbox{ for some }x\in \mathbb R^3\},$$
$D'$ is the subset of $\mathbb R^2$ given by
$$D'\eqdef \{(p_1,p_2): f(t,x,p_1,p_2,p_3)\neq 0 \mbox{ for some }x\in \mathbb R^3,\, \pel_3\in \mathbb R\}.$$
In \eqref{I.est}, we have estimated the integral $\iint_{D'} d\pel_1 d\pel_2$ by changing into the 2-dimensional polar coordinates:
$$\iint_{D'} d\pel_1 d\pel_2 = \int_0^{2\pi} \int_0^{\kappa(t,\gamma)}  u du d\gamma \leq \sqrt{2\pi}||\kappa(t,\cdot)||_{L^4_\gamma}^2\ls A(t)^2.$$
Then \eqref{I.est} establishes a better estimate than \eqref{clam.est} in region $I$.

We now move on to the estimate for the integral in region $II$. To this end, we use a system of polar coordinates (defined differently from above but the same as in \eqref{cone.vol.form}) with axis in the direction of $e_3$, i.e.,
$$\pel\cdot e_3=|\pel| \cos\theta. $$
 In other words, $\th= \angle(\frac{\pel}{|\pel|},e_3)$. Thus, in region $II$, we have, by definition
 $$\frac{\beta}{2}\leq \th\leq \pi-\frac{\beta}{2}.$$
By definition of these polar coordinates, $\phi$ coincides with $\gamma(\pel)$. By Proposition \ref{v.est}, 
$$|\pel|\ls {\kappa(t,\phi)}({\th}^{-1}+(\pi-\th)^{-1}).$$
We therefore have using also Proposition \ref{sing.est} that
\begin{multline} \notag
\int_{II} \frac{f}{\pZ  \sing} d\pel \\
\ls  
\int_0^{2 \pi} d\phi \int_{\frac{\beta}{2}}^{\pi-\frac{\beta}{2}} \sin\th  d\th  \int_0^{C\kappa(t,\phi)({\th}^{-1}+(\pi-\th)^{-1})} |\pel|  d|\pel| ~{\kappa(t,\phi)}^2({\th}^{-2}+(\pi-\th)^{-2}) 
\\
\ls  \int_0^{2 \pi} d\phi \int_{\frac{\beta}{2}}^{\pi-\frac{\beta}{2}} d\th ~ (\kappa(t,\phi))^4(\th^{-3}+(\pi-\th)^{-3})  
\\
\ls  \beta^{-2} \int_0^{2 \pi} (\kappa(t,\phi))^4 d\phi
\ls  A(t)^4\beta^{-2}.
\end{multline}
This completes the proof.
\end{proof}

\section{Estimates for the electromagnetic fields}
We now apply the estimates we obtained for the particle density to control the electromagnetic fields. We first bound $E_T$ and $B_T$:

\begin{proposition}\label{field.est.1} For $E_T$ from \eqref{ET.id} and $B_T$ from \eqref{BT.id} we have
$$
|E_T(t,x)|+|B_T(t,x)|\ls \log P(t)+(\log P(t))^2\int_0^t A(s)^4 ds.
$$
\end{proposition}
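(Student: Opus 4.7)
The plan is to reduce the claim to a single application of Proposition \ref{main.est} on the backward cone. Starting from Proposition \ref{T.prop} and parametrizing $C_{t,x}$ by the volume form \eqref{cone.vol.form}, the $(t-s)^{-2}$ weight is exactly cancelled by the $(t-s)^2$ factor in the surface measure, leaving
\begin{equation*}
|E_T(t,x)|+|B_T(t,x)|\ls \int_0^t ds \int_{S^2} d\om \int_{\Rt} \frac{f\bigl(s,x+(t-s)\om,\pel\bigr)}{\pZ^2 \sing^{3/2}}\, d\pel.
\end{equation*}
Since $\sing^{-1/2}\ls \pZ$ by Proposition \ref{trivial.sing.est}, the inner integrand is dominated by $f/(\pZ\sing)$, which is precisely the quantity estimated in Proposition \ref{main.est}.

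Setting $\beta\eqdef \angle(e_3,\pm\om)\in[0,\pi/2]$, Proposition \ref{main.est} supplies two simultaneous upper bounds for the momentum integral: the uniform bound $P(s)^2\log P(s)$ and the angularly concentrated bound $A(s)^4\log P(s)\,\beta^{-2}$. The key step is to integrate these over $S^2$ using the cutoff $\beta_0(s)\eqdef 1/P(s)\leq 1/2$, which is chosen so that the two bounds match. On the pair of polar caps $\{\beta\leq\beta_0\}$ around $\pm e_3$, whose total area on $S^2$ is $\sim \beta_0^2$, the uniform bound yields a contribution of size
$$P(s)^2\log P(s)\cdot \beta_0(s)^2 \ls \log P(s).$$
On the complementary equatorial belt $\{\beta\geq \beta_0\}$, the angularly concentrated bound combined with $\int_{\beta_0}^{\pi/2}\beta^{-2}\sin\beta\,d\beta \ls \int_{\beta_0}^{\pi/2}\beta^{-1}\,d\beta \ls \log P(s)$ yields a contribution of $A(s)^4(\log P(s))^2$.

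It then remains to integrate in $s\in[0,t]$. Because $P(\cdot)$ is non-decreasing and $t\leq T$ is bounded, the polar-cap contribution integrates to $\int_0^t \log P(s)\,ds \ls \log P(t)$, while the belt contribution integrates to $\int_0^t A(s)^4 (\log P(s))^2\, ds \leq (\log P(t))^2 \int_0^t A(s)^4\, ds$. Summing gives the stated bound. The main delicacy is the angular splitting: choosing $\beta_0$ much smaller than $1/P(s)$ forces polynomial-in-$P$ losses from the polar caps, while choosing it much larger makes the logarithmic integral in the belt unusable; the cutoff $\beta_0 = 1/P(s)$ is the natural balance that exploits the two-sided nature of Proposition \ref{main.est}.
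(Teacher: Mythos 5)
Your argument is correct and follows essentially the same route as the paper: reduce via Proposition \ref{T.prop} and $\sing^{-1/2}\ls\pZ$ to the quantity in Proposition \ref{main.est}, then split the sphere at an angular cutoff of size $P^{-1}$ and use the monotonicity of $P$ to integrate in time. The only cosmetic difference is that you take the cutoff $P(s)^{-1}$ while the paper uses $P(t)^{-1}$; both choices close the estimate in the same way.
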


\begin{proof}
Recall from Proposition \ref{T.prop} that we have
\begin{multline} \label{T.basic.est}
|E_T(t,x)|+|B_T(t,x)|
\ls \int_{C_{t,x}}\int_{\Rt} \frac{f(s,x+(t-s)\om,\pel)}{(t-s)^2\pZ ^2(1+\vh\cdot\om)^{\frac 32}} d\pel\, d\sigma
\\
\ls \int_0^t ds \int_0^{2\pi}d\phi \int_0^{\pi} \sin\th d\th\int_{\Rt} d\pel \frac{f(s,x+(t-s)\om,\pel)}{\pZ ^2(1+\vh\cdot\om)^{\frac 32}},
\end{multline}
where we have adopted the coordinate system in \eqref{cone.vol.form}.
We recall that in this coordinate system
$\om= (\sin\th\cos\phi,\sin\th\sin\phi,\cos\th)$ as in \eqref{normal.coord}.

By Proposition \ref{trivial.sing.est} we have ${\sing}^{-1}\ls \pZ ^2$, so that we can
use the main estimates in Proposition \ref{main.est} to obtain
\begin{multline} \notag
\int_{\Rt} \frac{f(s,x+(t-s)\om,\pel)}{\pZ ^2(1+\vh\cdot\om)^{\frac 32}} d\pel
\ls \int_{\Rt} \frac{f(s,x+(t-s)\om,\pel)}{\pZ (1+\vh\cdot\om)} d\pel\\
\ls \min\{P(t)^2\log P(t), A(t)^4\log P(t)(\angle(e_3,\pm\om))^{-2} \}.
\end{multline}
Returning to \eqref{T.basic.est}, we have
\begin{multline} \notag
(|E_T|+|B_T|)(t,x)
\ls 
\int_0^t \int_0^{2\pi} \int_{0}^{P(t)^{-1}} P(s)^2\log P(s) \sin\th d\th\, d\phi\, ds
\\
+\int_0^t \int_0^{2\pi} \int_{\pi-P(t)^{-1}}^{\pi} P(s)^2\log P(s) \sin\th d\th\, d\phi\, ds\\
 +\int_0^t \int_0^{2\pi} \int_{P(t)^{-1}}^{\pi-P(t)^{-1}} A(s)^4(\th^{-2}+(\pi-\th)^{-2})\log P(s) \sin\th d\th\, d\phi\, ds
 \\
\ls  \log P(t)+\log P(t)\int_0^t  \int_0^{2\pi}\int_{P(t)^{-1}}^{\pi-P(t)^{-1}} A(s)^4( \th^{-1}+(\pi-\th)^{-1}) d\th\, d\phi\, ds
\\
\ls  \log P(t)+(\log P(t))^2\int_0^t A(s)^4 ds.
\end{multline}
This completes the proof.
\end{proof}

We now proceed to estimate $E_{S,1}$ and $B_{S,1}$. While these terms are less singular compared to $E_{S,2}$ and $B_{S,2}$, they are not coupled to a good component of the electromagnetic field; a component that can be controlled after integrating along a null cone by the conservation law. As a result, we are unable to use a direct estimate of the particle density to close the argument. Instead, we integrate $E_{S,1}$ and $B_{S,1}$ along the projection of a characteristic of the Vlasov equation to the position space. This will enable us to use the conservation law for $E$ and $B$ in $L^2(\Rt)$.

To this end, we apply an estimate of Pallard:
\begin{proposition}[Pallard {\cite[Lemma 1.3]{Pallard}}]\label{P.lemma}
Let $X(t):\mathbb R_t \to \mathbb R^3$ be a $C^1$ function  with $|X'(t)|< 1$ and define
$$
I \eqdef \int_0^t ds'  \int_{C_{s',x}} d\sigma(s,\omega)  ~ \frac{g(s,X(s')+(s'-s)\omega)}{(s'-s)}. 
$$
Then 
$$
I\ls \int_0^t ds ~ \left(\int_s^t ds' ~ \left(1+\left| \log\left(1-|X'(s')|\right)\right| \right) \right)^{\frac 12} ~ \|g(s,\cdot)\|_{L^2(\mathbb R^3)}.
$$
\end{proposition}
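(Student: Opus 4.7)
The strategy is to unfold the cone integral into an $(s,\omega)$ integral, swap the order of $s$ and $s'$ integration by Fubini, and then apply Cauchy--Schwarz with weights chosen so that a change of variables $(s',\omega)\mapsto z=X(s')+(s'-s)\omega$ converts the $g$-factor into $\|g(s,\cdot)\|_{L^2(\Rt)}$, leaving a spherical integral of $1/(1+X'(s')\cdot\omega)$ to be computed directly. Using the polar form \eqref{cone.vol.form} of the cone volume element, I would first rewrite
$$I=\int_0^t ds'\int_0^{s'} ds\int_{S^2}g(s,X(s')+(s'-s)\omega)(s'-s)\,d\omega,$$
and exchange the order of the $s$ and $s'$ integrations to get
$$I=\int_0^t ds\int_s^t ds'\int_{S^2}g(s,X(s')+(s'-s)\omega)(s'-s)\,d\omega.$$

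The key geometric input is the Jacobian of the map $\Phi_s\colon(s',\omega)\mapsto z=X(s')+(s'-s)\omega$ for each fixed $s$. Using the identity $\partial_\theta\omega\times\partial_\phi\omega=\sin\theta\,\omega$, a direct computation gives
$$dz=(s'-s)^2\bigl(1+X'(s')\cdot\omega\bigr)\,ds'\,d\omega,$$
which is strictly positive by the hypothesis $|X'(s')|<1$. The same sign yields injectivity of $\Phi_s$: for fixed $(s,z)$, the function $F(s')\eqdef |z-X(s')|-(s'-s)$ has $F'(s')\leq |X'(s')|-1<0$, hence has at most one zero, so at most one pair $(s',\omega)$ can map to any given $z$. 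I would then apply Cauchy--Schwarz after splitting
$$g(s,z)(s'-s)=\bigl[g(s,z)(s'-s)(1+X'(s')\cdot\omega)^{1/2}\bigr]\cdot\bigl(1+X'(s')\cdot\omega\bigr)^{-1/2},$$
so that a change of variables via $\Phi_s$ in the first factor (justified by injectivity and the positive Jacobian above) produces
$$\int_s^t\!\!\int_{S^2}g(s,z)(s'-s)\,d\omega\,ds'\le \|g(s,\cdot)\|_{L^2(\Rt)}\left(\int_s^t\int_{S^2}\frac{d\omega\,ds'}{1+X'(s')\cdot\omega}\right)^{\!1/2}.$$

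To close, I would compute the inner spherical integral for fixed $s'$ by choosing the polar axis along $X'(s')$, obtaining $\int_{S^2}d\omega/(1+X'(s')\cdot\omega)=(2\pi/|X'(s')|)\log\bigl((1+|X'(s')|)/(1-|X'(s')|)\bigr)\lesssim 1+|\log(1-|X'(s')|)|$ (the apparent singularity at $|X'(s')|=0$ is removable since the original integrand is bounded uniformly there). Substituting this bound and integrating over $s\in(0,t)$ yields the claimed inequality. The main obstacle I anticipate is the Jacobian plus injectivity bookkeeping for $\Phi_s$; the Cauchy--Schwarz step and the final spherical integration are routine once this geometric setup is in place.
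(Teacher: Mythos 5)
Your proposal is correct and follows essentially the same route as the paper: Fubini to swap the $s$ and $s'$ integrations, Cauchy--Schwarz weighted by $(1+X'(s')\cdot\omega)^{\pm 1/2}$, the change of variables $z=X(s')+(s'-s)\omega$ with Jacobian $(s'-s)^2(1+X'(s')\cdot\omega)\sin\theta$, and the explicit spherical integral $\frac{2\pi}{|X'|}\log\frac{1+|X'|}{1-|X'|}\ls 1+|\log(1-|X'|)|$. The only difference is that you verify injectivity of the map directly via the monotonicity of $|z-X(s')|-(s'-s)$, whereas the paper simply cites Lemma 2.1 of Pallard for the diffeomorphism property; your version is self-contained and equally valid.
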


This proposition is slightly stronger than Lemma 1.3 in \cite{Pallard} in the sense that the original statement does not have both of the integrations in time, i.e., it reads
$$
I\ls \left(1+\left| \log(1-\sup_{0\leq s\leq t}|X'(s)|)\right|^{\frac 12}\right)\sup_{0\leq s\leq t}  ~ \|g(s,\cdot)\|_{L^2(\mathbb R^3)}.
$$
The stronger version in fact follows from the proof of Lemma 1.3 in \cite{Pallard}.  

We  point out for the reader  that we also use $X(s')+(s'-s)\omega$ and Pallard \cite{Pallard} uses the opposite sign $X(s')-(s'-s)\omega$.

\begin{proof}
Notice by Fubini's theorem that we have
\begin{equation*}
\begin{split}
I= &\int_0^t ds'\, \int_0^{s'} ds\, \int_0^{2\pi} d\phi\, \int_0^{\pi} (s'-s)\sin\th\, d\th\, g(s,X(s')+(s'-s)\omega)\\
=& \int_0^{t} ds\, \int_s^t ds'\,\int_0^{2\pi} d\phi\, \int_0^{\pi} (s'-s)\sin\th\, d\th\, g(s,X(s')+(s'-s)\omega).
\end{split}
\end{equation*}
We therefore define
\begin{equation*}
\tilde{I}_{s,t} 
\eqdef 
\int_s^t ds'\,\int_0^{2\pi} d\phi\, \int_0^{\pi} (s'-s)\sin\th\, d\th\, g(s,X(s')+(s'-s)\omega).
\end{equation*}
Then as in Lemma 2.1 in \cite{Pallard} we see that $\pi \eqdef X(s')+(s'-s)\omega$ is a $C^1_{s',\th,\phi}$ diffeomorphism with the Jacobian given by $J_\pi = (X'(s)\cdot \omega +1)(s'-s)^2\sin\theta$.  Then using the Cauchy-Schwartz inequality and this change of variable we have
\begin{multline}
\notag
\tilde{I}_{s,t} 
\ls 
\| g(s,\cdot)\|_{L^2(\mathbb{R}^3)}
\left( \int_s^t ds'\,\int_0^{2\pi} d\phi\, \int_0^{\pi}d\th ~ \frac{(s'-s)^2\sin^2\th}{\left| J_\pi \right|}  \right)^{1/2} 
\\
\ls 
\| g(s,\cdot)\|_{L^2(\mathbb{R}^3)}
\left(\int_s^t ds' ~ \left(1+\left| \log\left(1-|X'(s')|\right)\right| \right) \right)^{\frac 12}.
\end{multline}
The last inequality follows by evaluating the integral in $d\phi d\theta$ and estimating the result from above.
This implies the desired conclusion after integration in $s$.
\end{proof}

Using Proposition \ref{P.lemma}, we obtain the following bound on

\begin{proposition}\label{field.est.2}
Let $X(t)$ be a characteristic associated to the solution of the relativistic Vlasov-Maxwell system. Then
$$\int_0^t \left( |E_{S,1}(s,X(s))|+|B_{S,1}(s,X(s))| \right) ds \ls \sqrt{\log P(t)}\int_0^t A(s)^2 P(s) ds.$$
\end{proposition}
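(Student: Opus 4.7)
The plan is to bound $|E_{S,1}|+|B_{S,1}|$ along the characteristic by a spacetime integral in which the $\pel$-integration and the electromagnetic factor $|B|$ decouple, then apply Pallard's Lemma \ref{P.lemma} in conjunction with the $L^2$ energy bound from Proposition \ref{cons.law.1}. Evaluating Proposition \ref{S.prop.3} at $(s',X(s'))$ gives
\begin{equation*}
(|E_{S,1}|+|B_{S,1}|)(s',X(s')) \;\lesssim\; \int_{C_{s',X(s')}}\!\int_{\Rt} \frac{|B(s,y)|\,f(s,y,\pel)}{(s'-s)\,\pZ\,\sing^{1/2}}\,d\pel\,d\sigma,
\end{equation*}
with $y=X(s')+(s'-s)\om$. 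The key structural observation is that the kernel $(\pZ\,\sing^{1/2})^{-1}$ is bounded uniformly on $\Rt\times S^2$: from $\sing\geq 1/(2\pZ^2)$ (achieved in the direction $\vh=-\om$), one obtains $\pZ^{-1}\sing^{-1/2}\lesssim 1$.

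Combining this observation with Lemma \ref{cons.law.3} and assumption \eqref{main.assumption.1}, I would control the $\pel$-integral by the volume of the momentum support of $f$. Parametrizing in cylindrical coordinates $(|\pel_\perp|,\gamma,\pel_3)$ and using Cauchy-Schwarz on the compact $\gamma$-interval of length $2\pi$,
\begin{equation*}
\int_{\Rt}\frac{f(s,y,\pel)}{\pZ\,\sing^{1/2}}\,d\pel \;\lesssim\; \int_{-P(s)}^{P(s)}\!d\pel_3\int_0^{2\pi}\!d\gamma\!\int_0^{\kappa(s,\gamma)} r\,dr \;\lesssim\; P(s)\,\|\kappa(s,\cdot)\|_{L^2_\gamma}^2 \;\lesssim\; A(s)^2\,P(s),
\end{equation*}
uniformly in $y$ and $\om$. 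Defining $g(s,y)\eqdef A(s)^2\,P(s)\,|B(s,y)|$ and integrating in $s'\in[0,t]$, the problem reduces to estimating $\int_0^t ds'\int_{C_{s',X(s')}} g(s,y)/(s'-s)\,d\sigma$, which fits exactly into the setting of Pallard's Lemma \ref{P.lemma}.

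Applying that lemma yields
\begin{equation*}
\int_0^t (|E_{S,1}|+|B_{S,1}|)(s',X(s'))\,ds' \;\lesssim\; \int_0^t\!\left(\int_s^t \bigl(1+|\log(1-|X'(s')|)|\bigr)\,ds'\right)^{\!1/2}\! A(s)^2\,P(s)\,\|B(s,\cdot)\|_{L^2}\,ds.
\end{equation*}
Along a characteristic in the support of $f$, $X'(s')=\hat{\PL}(s')$ with $|\PL(s')|\leq P(s')\leq P(t)$, so $1-|\hat{\PL}|=\frac{1-|\hat{\PL}|^2}{1+|\hat{\PL}|}\geq\frac{1}{2(1+|\PL|^2)}\gtrsim P(t)^{-2}$, which gives $|\log(1-|X'(s')|)|\lesssim\log P(t)$ and hence $\int_s^t (1+|\log(1-|X'|)|)\,ds'\lesssim\log P(t)$. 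Proposition \ref{cons.law.1} supplies $\|B(s,\cdot)\|_{L^2}\lesssim 1$, and assembling these ingredients produces the claimed bound $\sqrt{\log P(t)}\int_0^t A(s)^2 P(s)\,ds$. The main obstacle is the $\om$-dependence of $\sing^{-1/2}$, which prevents any direct application of Pallard's lemma to the original integrand; the fix is the crude pointwise estimate $\pZ^{-1}\sing^{-1/2}\lesssim 1$ paired with the support volume from assumption \eqref{main.assumption.1}, producing an estimate uniform in $(y,\om)$ at the cost of only one power of $P$.
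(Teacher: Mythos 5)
Your proposal is correct and follows essentially the same route as the paper: the pointwise kernel bound $\pZ^{-1}\sing^{-1/2}\ls 1$ from Proposition \ref{trivial.sing.est}, the momentum-support volume bound $\ls A(s)^2P(s)$ via cylindrical coordinates and Cauchy-Schwarz in $\gamma$, the $L^2$ control of $B$ from Proposition \ref{cons.law.1}, and Pallard's lemma with $1-|X'|\gtrsim P(t)^{-2}$ supplying the $\sqrt{\log P(t)}$ factor. The only (cosmetic) difference is that you strip the $\om$-dependence from the integrand before invoking Pallard's lemma, whereas the paper bounds the $L^2_y$ norm of the full $\pel$-integral afterwards; your ordering is if anything slightly cleaner.
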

\begin{proof}
Since $X'(s)= \hat{\PL}(s)$ along a characteristic, we have
$$\sup_{s\leq s'\leq t} |X'(s')|\leq 1$$
and
$$\inf_{s\leq s'\leq t}\left( 1-|X'(s')|\right)\geq 1-\sup_{s\leq s'\leq t}|X'(s')|\gtrsim P(t)^{-2},$$
which imply
$$\left(\int_s^t ds' ~ \left(1+\left| \log\left(1-|X'(s')|\right)\right| \right) \right)^{\frac 12}\ls (t-s)^{\frac 12}\big(\log P(t)\big)^{\frac 12}.$$
Then we use the bound for $E_{S,1}$ and $B_{S,1}$ in Proposition \ref{S.prop.3} and Pallard's estimate in Proposition \ref{P.lemma} with 
$$
g(s, x+(t-s)\omega) = \int_{\Rt} \frac{|B| f(s, x+(t-s)\omega, \pel)}{\pZ (1-\vh\cdot\om)^{\frac 12}} d\pel.
$$
Thus it suffices to show that
$$
\left\|\int \frac{|B| f}{\pZ \sing^{\frac 12}} d\pel \right\|_{L^2(\mathbb R^3)} \ls A(s)^2 P(s). 
$$ 
Recall from Proposition \ref{trivial.sing.est} that
${\sing}^{-1}\ls \pZ ^2$.
Then we have
$$
\left\|\int \frac{|B| f}{\pZ \sing^{\frac 12}} d\pel \right\|_{L^2(\mathbb R^3)}\\
\ls \left\| B \right\|_{L^2(\mathbb R^3)}  \left\| \int f d\pel \right\|_{L^\infty(\mathbb R^3)}.
$$
By the conservation law in Proposition \ref{cons.law.1},
$$\left\| B \right\|_{L^2(\mathbb R^3)}\ls 1.$$
On the other hand, the particle density can be estimated by the total volume of the momentum support of $f$, i.e.,
$$
\left\|\int f d\pel \right\|_{L^\infty(\mathbb R^3)}
\ls  \int_{-P(s)}^{P(s)} d\pel_3 \int_0^{2\pi} d\gamma   \int_0^{\kappa(s,\gamma)}  u du 
\ls  A(s)^2 P(s).
$$
This completes the proof.
\end{proof}

Finally, the terms $E_{S,2}$ and $B_{S,2}$ can be controlled using the estimates in Proposition \ref{main.est}.

\begin{proposition}\label{field.est.3}  We have the following estimates
$$
|E_{S,2}(t,x)|+|B_{S,2}(t,x)|
\ls 
P(t)\log P(t)+P(t)\log P(t)\left(\int_0^t A(s)^8 ds\right)^{\frac 12}.
$$
\end{proposition}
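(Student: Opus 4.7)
The plan is to apply Cauchy--Schwarz to the cone representation from Proposition \ref{S.prop.3} so as to pair the good components $|E\cdot\om|+|B\cdot\om|+|B+\om\times E|\leq K_g$ against the singular kernel $(t-s)^{-1}\pZ^{-1}\sing^{-1}$. This pairing is exactly the structural feature that lets us absorb the electromagnetic factor into the flux bound of Proposition \ref{cons.law.2}, leaving only a density-side quantity to estimate.

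After Cauchy--Schwarz in $d\sigma$ and the conservation estimate $\int_{C_{t,x}} K_g^2\,d\sigma \ls 1$, we are reduced to bounding
\begin{equation*}
J(t,x) \eqdef \int_{C_{t,x}} \frac{1}{(t-s)^2} \left(\int_{\Rt} \frac{f(s,x+(t-s)\om,\pel)}{\pZ \sing}\, d\pel \right)^2 d\sigma.
\end{equation*}
Writing the cone volume form as $(t-s)^2\sin\th\,d\th\,d\phi\,ds$ via \eqref{cone.vol.form} cancels the singular $(t-s)^{-2}$ weight, so $J$ becomes an integral of a squared density expression against $\sin\th\,d\th\,d\phi\,ds$. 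I would then invoke Proposition \ref{main.est}, together with the monotonicity $P(s)\leq P(t)$, to obtain
\begin{equation*}
\int_{\Rt} \frac{f}{\pZ\sing}\,d\pel \ls \log P(t)\,\min\{P(t)^2,\,A(s)^4\,\beta^{-2}\},
\end{equation*}
where $\beta \eqdef \angle(e_3,\pm\om)$. Choosing polar coordinates with $e_3$ as the pole (so that $\beta=\min\{\th,\pi-\th\}$), the $d\th\,d\phi$ integration reduces, up to absorbing the constant $d\phi$ integration and the two polar caps, to a one-dimensional integral against $\beta\,d\beta$ on $[0,\pi/2]$.

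The crucial design choice is where to split the $\beta$ integral. I would split at $\beta_0 \eqdef P(t)^{-1}$: on $\{\beta \leq P(t)^{-1}\}$ the pointwise bound $P(t)^2 \log P(t)$ contributes $\ls P(t)^2(\log P(t))^2$ after squaring, since the angular area is $O(P(t)^{-2})$; on $\{\beta \geq P(t)^{-1}\}$ the bound $A(s)^4\log P(t)\,\beta^{-2}$ yields, after squaring and using $\int_{P(t)^{-1}}^{\pi/2} \beta^{-3}\,d\beta \ls P(t)^2$, a contribution $\ls A(s)^8 P(t)^2(\log P(t))^2$. Summing and integrating in $s$ gives
\begin{equation*}
J(t,x) \ls P(t)^2(\log P(t))^2\left(1 + \int_0^t A(s)^8\,ds\right),
\end{equation*}
and the claimed estimate follows from $\sqrt{a+b}\leq \sqrt{a}+\sqrt{b}$ after taking square roots.

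The main obstacle is the selection of the cutoff $\beta_0$. The pointwise-optimal balance $\beta\sim A(s)^2/P(t)$ would equalize the two angular contributions at $A(s)^4 P(t)^2(\log P(t))^2$ and hence bring in $\int_0^t A(s)^4\,ds$, which is \emph{not} the integrability structure furnished by hypothesis \eqref{main.assumption.3}. The deliberately sub-optimal choice $\beta_0=P(t)^{-1}$ shifts the budget so that $A^4$ is replaced by $A^8$ under the time integral at the cost of a constant prefactor in $P$, which is precisely what is needed to eventually close the continuation argument.
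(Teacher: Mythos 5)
Your proof is correct and follows essentially the same route as the paper: Cauchy--Schwarz on the cone pairing $K_g$ with the singular density integral, the flux bound of Proposition \ref{cons.law.2}, Proposition \ref{main.est}, and an angular split at $\beta\sim P^{-1}$ (the paper splits at $P(s)^{-1}$ rather than $P(t)^{-1}$, which is immaterial). Your closing remark that the balanced cutoff $\beta_0\sim A(s)^2/P(t)$ would produce an unusable $\int_0^t A(s)^4\,ds$ is not quite right---that quantity is also controlled via $\int_0^s A^4 \ls s^{1/2}(\int_0^s A^8)^{1/2}$, as the paper itself uses in the conclusion---but this does not affect the validity of the proof.
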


\begin{proof}
We use the estimates from Proposition \ref{S.prop.3} and Cauchy-Schwarz to obtain
\begin{multline}\label{S.basic.est}
|E_{S,2}(t,x)|+|B_{S,2}(t,x)|
\ls 
\int_{C_{t,x}}\int_{\Rt} \frac{(\left| K_g \right| f)(s,x+(t-s)\om,\pel)}{(t-s)\pZ (1+\vh\cdot\om)} d\pel\, d\sigma
\\
\ls 
\int_0^t \int_0^{2\pi} \int_0^{\pi}\int_{\Rt} \frac{(\left| K_g \right| f)(s,x+(t-s)\om,\pel)}{\pZ (1+\vh\cdot\om)} d\pel\, (t-s) \sin\th d\th\, d\phi\, ds
\\
\ls \left(\int_0^t \int_0^{2\pi} \int_0^{\pi} |K_g|^2  \sin\th d\th\, d\phi\,(t-s)^2 ds\right)^{\frac 12}
\\
\times\left(\int_0^t \int_0^{2\pi} \int_0^{\pi}
\left(\int_{\Rt} \frac{f(s,x+(t-s)\om,\pel)}{\pZ (1+\vh\cdot\om)} d\pel
\right)^2\sin\th d\th\, d\phi\, ds\right)^{\frac 12}.
\end{multline}
Here we defined $K_g$ in \eqref{good.K}.

By the conservation law in Proposition \ref{cons.law.2},
$$\int_0^t \int_0^{2\pi} \int_0^{\pi} |K_g|^2  \sin\th d\th\, d\phi\, (t-s)^2 ds= ||K_g||_{L^2(C_{t,x})}^2 \ls 1.$$
The particle density term can be estimated using Proposition \ref{main.est}:
\beaa
& &\int_0^t \int_0^{2\pi} \int_0^{\pi}
\left(\int_{\Rt} \frac{f(s,x+(t-s)\om,\pel)}{\pZ (1+\vh\cdot\om)} d\pel
\right)^2\sin\th d\th\, d\phi\, ds
\\
&\ls &\int_0^t \int_0^{2\pi} \int_{0}^{P(s)^{-1}} P(s)^4\log^2 P(s) \sin\th d\th\, d\phi\, ds
\\
& & +\int_0^t \int_0^{2\pi} \int_{\pi-P(s)^{-1}}^{\pi} P(s)^4\log^2 P(s) \sin\th d\th\, d\phi\, ds
\\
& & +\int_0^t \int_0^{2\pi} \int_{P(s)^{-1}}^{\pi-P(s)^{-1}} A(s)^8(\th^{-4}+(\pi-\th)^{-4})\log^2 P(s) \sin\th d\th\, d\phi\, ds\\
&\ls & P(t)^2\log^2 P(t)+\log^2 P(t)\int_0^t A(s)^8 ds \int_{P(t)^{-1}}^{\pi-P(t)^{-1}} ( \th^{-3}+(\pi-\th)^{-3}) d\th\, \\
&\ls & P(t)^2\log^2 P(t)+P(t)^2\log^2 P(t)\int_0^t A(s)^8 ds.
\eeaa
Returning to \eqref{S.basic.est}, we thus conclude that the estimate in the statement of Proposition \ref{field.est.3} holds.
This completes the proof.
\end{proof}

\section{Conclusion of the proof}
We now conclude the proof of Theorem \ref{main.theorem}. By Theorem \ref{GS.theorem}, it suffices to show that $P$ is bounded by a function depending only on $t$. We have
\begin{proposition}
$P(t)\ls 1.$
\end{proposition}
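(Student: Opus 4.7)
The plan is to integrate the characteristic ODE and reduce the statement to a nonlinear integral inequality for $P(t)$, which I will then close via a Gronwall argument with a double-logarithm iteration.

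First, along any characteristic $(X(t),\PL(t))$ the ODE \eqref{char} gives $|\PL(t)|\le |\PL(0)|+\int_0^t (|E|+|B|)(s,X(s))\,ds$, and by \eqref{ini.bd.mom} the initial momentum support is bounded; hence it suffices to bound this integral uniformly over all characteristics. I would decompose $4\pi E=(E)_0+E_T+E_{S,1}+E_{S,2}$ (and similarly for $B$), observe that $(E)_0,(B)_0$ are bounded smooth functions of the initial data, and apply the estimates from Section 5: the pointwise bounds of Propositions \ref{field.est.1} and \ref{field.est.3} for the $E_T+B_T$ and $E_{S,2}+B_{S,2}$ pieces (evaluated at $(s,X(s))$ and integrated in $s$), and the already-integrated bound of Proposition \ref{field.est.2} for $E_{S,1}+B_{S,1}$.

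Using that $P$ is nondecreasing (so $\log P(s)\le \log P(t)$), the inequality $1\le P(s)/2$, and Cauchy--Schwarz to dominate $\int_0^s A(r)^4\,dr$ by $\sqrt{T}\,(\int_0^s A(r)^8\,dr)^{1/2}$, these three contributions combine into an estimate of the form
$$
P(t)\ls 1+\log P(t)\int_0^t g(s)\,P(s)\,ds,
$$
where
$$
g(s)\eqdef 1+A(s)^2+\int_0^s A(r)^4\,dr+\Big(\int_0^s A(r)^8\,dr\Big)^{1/2}
$$
lies in $L^1([0,T])$ by hypothesis \eqref{main.assumption.3}. This recovers precisely the integral inequality anticipated in \eqref{goal.0}.

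To close, I would use a nonlinear Gronwall step. Setting $V(t)\eqdef 2+\int_0^t g(s)P(s)\,ds$, and restricting to the only interesting regime $P(t)\ge e$, the inequality yields $P(t)/\log P(t)\ls V(t)$. Since $\Phi(y)=y/\log y$ is increasing on $[e,\infty)$ with $\Phi^{-1}(x)=O(x\log x)$, this gives $P(t)\ls V(t)\log V(t)$. Hence
$$
V'(t)=g(t)\,P(t)\ls g(t)\,V(t)\log V(t),
$$
which is equivalent to $\tfrac{d}{dt}\log\log V(t)\ls g(t)$. Integrating on $[0,t]$ produces $\log\log V(t)\ls 1+\|g\|_{L^1([0,T])}$, so $V(t)$, and therefore $P(t)$, is bounded uniformly on $[0,T]$.

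The main obstacle is not the hard harmonic-analysis work (which has already been carried out in Sections 4 and 5) but rather the bookkeeping: verifying that every contribution can be packaged so that only a single factor of $\log P(t)$ appears outside the integral, and that the resulting weight $g$ genuinely lies in $L^1([0,T])$. The assumption \eqref{main.assumption.3} is exactly tuned for this, with Cauchy--Schwarz converting the available $(\int A^8)^{1/2}$ control into the $\int A^4$ control required by the $E_T,B_T$ bound. Once the integral inequality is in hand, the double-exponential iteration is routine.
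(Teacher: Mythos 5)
Your proposal is correct and follows essentially the same route as the paper: the same reduction via Propositions \ref{field.est.1}, \ref{field.est.2}, \ref{field.est.3} to the integral inequality $P(t)\ls 1+\log P(t)\int_0^t g(s)P(s)\,ds$, including the same Cauchy--Schwarz step converting $\int_0^s A^4$ into $(\int_0^s A^8)^{1/2}$. The only (cosmetic) difference is in the final nonlinear Gronwall closure: you integrate $\frac{d}{dt}\log\log V\ls g$ directly, whereas the paper runs a bootstrap on $h=P/\log P$ with a double-exponential ansatz; both yield the same double-exponential bound.
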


\begin{proof}
For notational simplicity, we define 
$$
g(s) \eqdef 1+A(s)^2+\left(\int_0^s A(s')^8 ds'\right)^{\frac 12}.
$$
Notice that by the assumption of Theorem \ref{main.theorem}, $g$ is integrable in time.

By \eqref{char}, the momentum support $P$ can be estimated by
$$
P(t)\ls 1+ \int_0^t (|E(s)|+|B(s)|) ds.
$$
By the estimates from Propositions \ref{field.est.1}, \ref{field.est.2} and \ref{field.est.3} we have 
\beaa
P(t) \ls 1+\log P(t)\int_0^t \left(g(s)+\int_0^s A(s')^4 ds'\right)P(s) ds,
\eeaa
where the implicit constant depends on $t$.
Notice that 
$$
\int_0^s A(s')^4 ds'\ls  s^{\frac 12}\left(\int_0^s A(s')^8 ds'\right)^{\frac 12}.
$$
Thus, we have
\bea \notag 
P(t) \leq C_0(t)+C_0(t)\log P(t) \int_0^t g(s) ~ P(s) ds,
\eea
where $C_0(t)$ is a positive continuous function on $\mathbb R$. Let $h(t)=\frac{P(t)}{\log P(t)}$. Then, since $P(s) \ls h(s)\log h(s)$, the previous estimate implies
\bea \label{main.est.h}
h(t) \leq C_1(t)+C_1(t) \int_0^t g(s) ~ h(s)\log h(s)~ ds,
\eea
for some positive continuous function $C_1(t)$. 
This is the main estimate that we will need.

We want to show that for fixed $T>0$,
\bea
h(t)\leq C_T \quad\forall t\in [0,T).\label{goal}
\eea
For any fixed $T$, we can assume without loss of generality that $C_1(t)$ is a positive constant.

To achieve \eqref{goal}, we use a continuity argument. Assume as a bootstrap assumption that
\bea
h(t)\leq 2C_1\exp\left(\exp\left(\Delta_0\int_0^t g(s') ds'\right)\right)\label{bootstrap}
\eea
for all $t<T$, where $\Delta_0$ is a large constant to be chosen later.

Then, by \eqref{main.est.h}, we have
\begin{multline}\notag
h(t)
\leq C_1+C_1 \int_0^t g(s)   h(s)\log h(s) ds
\\
\leq C_1+ 2C_1^2 \int_0^t g(s) e^{e^{\Delta_0\int_0^s g(s') ds'}}
\left(\log(2C_1)+e^{\Delta_0\int_0^s g(s') ds'} 
\right) ds
\\
\leq C_1+ 2C_1^2 \int_0^t g(s) e^{e^{\Delta_0\int_0^s g(s') ds'}}
(\log(2C_1)+1)\exp \left(\Delta_0\int_0^s g(s') ds'\right) ds
\\
\leq 
C_1+ 2C_1^2(\log(2C_1)+1) \Delta_0^{-1} \int_0^{e^{\int_0^t g(s') ds'}} e^{e^{\Delta_0\int_0^s g(s') ds'}} d(e^{\Delta_0\int_0^s  g(s') ds'})
\\
=C_1+2C_1^2(\log(2C_1)+1) \Delta_0^{-1}\left(e^{e^{\Delta_0\int_0^t g(s') ds'}} -e\right)\\
\leq 
\left(C_1+2C_1^2(\log(2C_1)+1) \Delta_0^{-1}\right)e^{e^{\Delta_0\int_0^t g(s') ds'}},
\end{multline}
where in the last steps we have used $e^{e^{\Delta_0\int_0^t g(s') ds'}}\geq 1$.   By choosing $\Delta_0$ sufficiently large depending on $C_1$, we have
$$C_1+2C_1^2(\log(2C_1)+1) \Delta_0^{-1} \leq \frac 32 C_1.$$
Thus we have closed the bootstrap assumption \eqref{bootstrap}. Therefore, \eqref{goal} holds. Returning to the definition $h(t)=\frac{P(t)}{\log P(t)}$, we have thus showed that $P(t)\ls 1$, as desired.
\end{proof}


\end{document}